\numberwithin{equation}{section}
\begin{document}

%-----------------------------------    Title Page    ------------------------------------%

% Authors
\author[Hyunbin Kim]{Hyunbin Kim}
\address{Department of Mathematics \ Yonsei University \ 50 Yonsei-Ro \ Seodaemun-Gu \ Seoul 03722 \ Korea} 
\email{hyunbinkim@yonsei.ac.kr}

% Title
\title{Closed-String Mirror Symmetry for Log Calabi-Yau Surfaces}

% Abstract
\begin{abstract}
This paper establishes closed-string mirror symmetry for all log Calabi-Yau surfaces with generic parameters, where the exceptional divisors are sufficiently small. We demonstrate that blowing down a $(-1)$-divisor removes a single geometric critical point, ensuring that the resulting potential remains a Morse function. Additionally, we show that the critical values are distinct, which implies that the quantum cohomology $QH^{\ast}(X)$ is semi-simple.
\end{abstract}

\maketitle
% Table of Contents
\setcounter{tocdepth}{1}
\tableofcontents

%-----------------------------------------------------------------------------------------%

%-------------------------------  ⤋ ⤋ ⤋  Section 1  ⤋ ⤋ ⤋  -------------------------------%

\section{Introduction}

Given a special Lagrangian fibration on a Calabi-Yau manifold, the Strominger-Yau-Zaslow conjecture \cite{SYZ96} (SYZ conjecture for short) suggests that the mirror pair should be constructed by taking fiber-wise dual. The SYZ approach has been since used extensively to construct mirrors beyond the realm of Calabi-Yau manifolds, especially for \textit{log Calabi-Yau surfaces}, notably in \cite{auroux07}, \cite{BCHL}, \cite{GHK}, \cite{HK}, \cite{LaiZhou}, etc.

A log Calabi-Yau surface (also known as a Looijenga pair) is a rational projective surface $X$, together with a reduced rational curve $D \in \lvert -K_{X} \rvert$ with at least one singular point. To construct the mirror of $(X, D)$, one typically starts by taking the fiber-wise dual of the special Lagrangian torus fibration (usually given as the moment map) on the complement of the anticanonical divisor $ Y = X \setminus D$. The divisor $D$ is then accounted for by equipping $\check{Y}$ with a Laurent polynomial $W : \check{Y} \rightarrow \Lambda$ from the mirror to the Novikov field (see \ref{notation} for the definition), called the \textit{potential}. The potential records holomorphic disks with Maslov index $2$, provided that the torus fibers are weakly unobstructed in the sense of \cite{FOOO}. More recently, a Family Floer theoretic formulation of the SYZ conjecture has been introduced (\cite{Ab-famFl1}, \cite{Yuan}), where the mirror $\check{Y}$ is constructed as a rigid analytic variety; this perspective is adopted throughout this paper. Mirror symmetry then predicts that the singularity information of $W$, i.e. the Jacobian ring $\mathrm{Jac}(W)$ is equivalent to the symplectic information of $X$, encoded as the quantum cohomology $QH^{\ast}(X)$.

Studying blowups and blowdowns of log Calabi-Yau surfaces is a natural extension of this framework, especially since log Calabi-Yau surfaces can be systematically constructed through a series of blowups and blowdowns on a toric model \cite[Proposition 1.3]{GHK}. Blowing up a non-torus fixed point on $D$ results in a \textit{wall}, a codimension $1$ family of Lagrangians which bounds Maslov index $0$ disks. Such walls cause the potential to differ from one chamber to another, related by the \textit{wall-crossing formula}. In general, walls collide and generate more walls, and the wall structure is explicitly calculated using \textit{scattering diagrams}. In \cite{GHK}, a tropical method of constructing family mirrors and the canonical scattering diagram for log Calabi-Yau surfaces is presented, which was later recovered in the context of Lagrangian Floer theory in \cite{BCHL}.

For Fano and semi-Fano surfaces, one can directly work with the full potential, as it can be explicitly written down as shown in \cite{CO}, \cite{CLL}, and \cite{BCHL}. However, for non-Fano surfaces, the full potential may become an infinite series due to contributions from negative sphere bubbles, and explicit formulas are generally unknown (apart from some examples, e.g. \cite{auroux07}).  Despite this, only a few leading order terms are responsible for critical points that are \textit{geometric}, in the sense that they actually support Lagrangian torus fibers of  $\check{Y}$ (referred to as \textit{geometric critical points}). Provided that the Lagrangian fibers are weakly unobstructed, it is then feasible to work with non-Fano surfaces. Generically speaking, blowing up a point on $D$ introduces multiple terms (corresponding to broken disks and disks with sphere bubbles) to the potential, but only one of these terms gives rise to a geometric critical point, which is in fact non-degenerate \cite{HK23}. Thus, inductively, the potential $W$ of a log Calabi-Yau surface obtained through a series of blowups is generically a Morse function.\\

In this paper, we examine the remaining classes of log Calabi-Yau surfaces, namely those obtained by blowing down $(-1)$-divisors, and complete the picture of mirror symmetry for all log Calabi-Yau surfaces:
\begin{named}{Theorem I}[\cref{thm:main}]\label{thm:intromain}
Let $(X,D)$ be a log Calabi-Yau surface with generic parameters, i.e. $\omega$ satisfies \ref{assumption}. Then, 
\[
	\mathrm{Jac} ( W ) \cong QH^{\ast} ( X ).
\]
\end{named}

\noindent Blowing down a $(-1)$-divisor reduces the rank of the quantum cohomology by one, suggesting a corresponding decrease in the number of geometric critical points. However, unlike blowups, blowdowns cause an abrupt change in the leading order terms of the potential by removing multiple terms. This complicates the analysis of the potential's singularity information and initially suggests that more than one critical point is lost. Nonetheless, we find that some previously non-leading order terms become leading order terms, and ultimately, only a single monomial is lost. After obtaining the correct potential, an affine coordinate change is introduced on the base to accurately track changes in the domain of the potential function (i.e. the moment polytope) during the blowdown process.

Another crucial step in the proof of \ref{thm:intromain} is establishing the semi-simplicity of the quantum cohomology ring for specific symplectic forms:
\begin{named}{Theorem II}[\cref{prop:semisimple}]\label{thm:intromain2}
Let $(X, D)$ be a  log Calabi-Yau surface with generic coefficients, i.e. $\omega$ satisfies \ref{assumption}. Then the quantum cohomology $QH^{\ast}( X )$ is semi-simple.
\end{named}
\noindent The proof of \ref{thm:intromain2} relies on the crucial observation made by Auroux \cite{auroux07} and many others, that the critical value of the potential equals the eigenvalue of the quantum multiplication by $c_{1}(X)$.\\

The paper is organized as follow. In \cref{sec:2}, we review the basic construction of the SYZ mirror for log Calabi-Yau surfaces and briefly recall basic geometric setups and results from \cite{HK23}. In \cref{sec:3}, we prove weakly unobstructedness of Lagrangian fibers and demonstrate that blowdown results in a decrease in the number of geometric critical points by one. Finally, in \cref{sec:4}, we prove our two main theorems \ref{thm:intromain} and \ref{thm:intromain2}, establishing semi-simplicity of the quantum cohomology and closed string mirror symmetry for log Calabi-Yau surfaces.\\

%-----------------------------------------------------------------------------------------% 
% Notation
%-----------------------------------------------------------------------------------------% 
\begin{notation}{Notation}\label{notation}
We fix some standard notations used throughout:
\begin{align*}
&\Lambda:=
\left\{ \;
\sum_{i=0}^{\infty} c_i T^{\lambda_i} 
\;\, \bigg| \;\,
c_i \in \mathbb{C},\, \lim_{i\to \infty} \lambda_i = \infty 
\;\right\},
\\
&\Lambda_0:= 
\left\{\;
\sum_{i=0}^{\infty} c_i T^{\lambda_i} \in \Lambda
\;\, \bigg| \;\,
\lambda_i \geq 0 
\;\right\},
\\
&\Lambda_+:= 
\left\{\;
\sum_{i=0}^{\infty} c_i T^{\lambda_i} \in \Lambda
\;\, \bigg| \;\,
\lambda_i > 0
\;\right\},
\\
&\Lambda_U:=
\mathbb{C}^\ast \oplus \Lambda_+ .
\end{align*}
The valuation map $\mathrm{val}  : \Lambda \to \mathbb{R}$ is defined by
\[
	\mathrm{val}:  \sum_{i=0}^{\infty} c_i T^{\lambda_i} \mapsto \min_i \left\{\;\lambda_i \;\, \Big| \;\, c_{i} \neq 0, \,\, i=0,1,2,\cdots\;\right\}.
\]
\end{notation}
%-----------------------------------------------------------------------------------------% 
%-----------------------------------------------------------------------------------------% 

%-----------------------------------------------------------------------------------------% 
% Acknowledgement
%-----------------------------------------------------------------------------------------% 
\begin{center}
{\bf Acknowledgement}
\end{center}

I thank my advisor, Professor Hansol Hong, for his support and guidance. I also thank Professor Kaoru Ono, Yu-Shen Lin, Siu Cheong Lau, and Sam Bardwell-Evans for their valuable discussions, as well as the anonymous referees for their helpful comments. This research was supported by the National Research Foundation of Korea (NRF) grant funded by the Korean government (MSIT) (No. 2020R1C1C1A01008261 and No. 2020R1A5A1016126).

%-----------------------------------------------------------------------------------------% 
%-----------------------------------------------------------------------------------------% 

%-------------------------------  ⤊ ⤊ ⤊  Section 1  ⤊ ⤊ ⤊  -------------------------------%

%-------------------------------  ⤋ ⤋ ⤋  Section 2  ⤋ ⤋ ⤋  -------------------------------%

\section{Critical points of Mirror Landau-Ginzburg Potentials}\label{sec:2}

We start by reviewing Auroux's construction of the special Lagrangian fibration on an anticanonical divisor complement and its associated mirror LG model. The rest of the section revisits \cite{HK23}, which will be taken as the basic geometric setup for this paper.

% Subsection 2.1
%-----------------------------------------------------------------------------------------% 
\subsection{SYZ Mirror Construction and Lagrangian Floer Theory}

Consider a special Lagrangian fibration $\varphi : X \backslash D \to B$ on the complement $X \backslash D$ of an anticanonical divisor $D \subset X$. For a point $u \in B$, let $\{e_{1},\cdots, e_{n}\}$ be a basis for $H_{n-1} (L_u;\mathbb{Z}) =H^1 (L_u;\mathbb{Z})$, and let $\{f_{1},\cdots, f_{n}\}$ be its dual basis of $H_{1} (L_u;\mathbb{Z})$. The complex and symplectic affine coordinates on $B$ are respectively given by 
\[
	 u_i (u) := \int_{C_i} \mathrm{Im}\, \Omega,
	 \qquad 
	 x_i (u)=  \int_{A_i} \omega
	 \qquad i=1,\cdots, n,
\]
where $\Omega$ is the volume form determined by $D$ which has simple poles along $D$ and is holomorphic on $X \backslash D$. Here, $C_i$ is an $n$-dimensional chain swept out by $e_i$ along a path from the fixed reference point $u_0 \in B$ to $u$, and $A_i$ is a cylinder obtained analogously.

The mirror is the total space $\check{Y}^\mathbb{C}$ of the dual torus fibration $\check{\varphi}^{\mathbb{C}} : \check{Y}^\mathbb{C} \rightarrow B$, along with a Laurent series $W^\mathbb{C}$, called the \emph{potential}, determined by the count of holomorphic disks that intersect $D$. Explicitly, the mirror $\check{Y}^\mathbb{C}$ is 
\[
	\check{Y}^{\mathbb{C}} = 
	\left\{
	\left( L_u:=\varphi^{-1} (u),\nabla \right) 
	\;\; \Big| \;\; 
	u \in B, \;
	\nabla \in \Hom \left( H_{1}(L_u),  U(1) \right)
	\right\}.
\]
The potential $W^\mathbb{C} :\check{Y}^\mathbb{C} \to \mathbb{C}$ can be then written as
\begin{equation}\label{eqn:Wanticancc}
W^\mathbb{C} \left( L_u,\nabla \right) = \mathlarger{\sum}_{\substack{\mu(\beta)=2, \\ \beta \in \pi_{2} \left( X, L_{u} \right)}} N_\beta  e^{ - \int_\beta \omega} hol_{\partial \beta} \nabla,
\end{equation}
where $N_\beta$ is the number of holomorphic disks bounding $L_u$ in class $\beta$ (passing through a generic point of $L_u$). If $X$ is toric Fano, then $N_\beta$ counts disks intersecting $D$ exactly once, and \cref{eqn:Wanticancc} is a finite sum. \cref{eqn:Wanticancc} is also a finite sum if $X$ is a semi-Fano toric surfaces \cite{chan-lau}, with additional terms responsible for disks with sphere bubbling. In general, the non-Archimedean valuation ring $\Lambda$ is introduced, and \cref{eqn:Wanticancc} becomes
\begin{equation}\label{eqn:Wanticanll}
W \left( L_u,\nabla \right) =\mathlarger{\sum}_{\substack{\mu(\beta)=2, \\ \beta \in \pi_{2} \left( X, L_{u} \right)}} N_\beta  T^{  \int_\beta \omega} hol_{\partial \beta} \nabla,
\end{equation}
where $e^{-1}$ has been substituted by the formal variable $T$. Note that \cref{eqn:Wanticanll} always converges in the $T$-adic topology. Using local coordinates
\begin{equation*}\label{eqn:mirrorzi}
z_i=T^{x_i} hol_{f_i} \nabla,
\end{equation*}
the potential $W$ can be written as
\begin{equation}\label{eqn:Wzi}
W(z_{1},\cdots, z_{n})=\sum N_\beta T^{\int_{A_{\partial \beta}} \omega}  z^{\partial \beta},
\end{equation}
where $z^{\gamma} := z_{1}^{(\gamma,e_{1})} \cdots z_{n}^{(\gamma,e_{n})}$ for $\gamma \in H_{1} (L_u;\mathbb{Z})$, and $T^{\int_{A_{\partial{\beta}}} \omega}$ is the flux between $L_{u_0}$ and $L_u$ defined similarly to the symplectic affine coordinates.

Provided that the Lagrangian fiber $L_{u}$ is weakly unobstructed (see \cref{lemma:unobstructedness}), the mirror potential \cref{eqn:Wzi} can also be thought of as the local restriction $W_{u}$ of the Lagrangian Floer potential, defined implicitly by
\begin{equation}\label{eqn:Wfiber}
\sum_{k}m_{k}\left( b, \cdots, b \right) = W_{u}(b) \cdot [ L_{u} ], \qquad b = \sum y_{i}e_{i} \in H^{1} (L_{u} ; \Lambda_{+}).
\end{equation}
Allowing $W$ in \cref{eqn:Wanticanll} to be equipped with $\Lambda_U$-connection, for $(\underline{z}_{1}, \dots, \underline{z}_{n}) := (e^{y_{1}}, \dots, e^{y_{n}}) \in \Lambda_{U}$, \cref{eqn:Wanticanll} and \cref{eqn:Wzi} are related by 

\begin{equation}\label{eqn:relation}
	W_{u}(\underline{z}_{1}, \cdots, \underline{z}_{n}) 
	= W\left( L_{u}, \nabla^{(\underline{z}_{1}, \cdots, \underline{z}_{n})} \right),
\end{equation}
where $\nabla^{(\underline{z}_{1}, \cdots, \underline{z}_{n})} \in \Hom (H_{1}(L_u) , \Lambda_U)$ is a flat connection with holonomy $\underline{z}_{i}$ along the dual of $e_{i}$. The mirror $\check{Y}^{\mathbb{C}}$ is replaced by the rigid analytic variety $\check{Y}$, equipped with the dual torus fibration $\check{\varphi} :\check{Y} \to B$ with $(\Lambda_U)^n$-fibers over the same base. The fibration map $\check{\varphi}$ can be identified with the restriction $\mathrm{val} \big|_{B} : \check{\varphi}^{-1}(B) = \check{Y} \subset (\Lambda^\times)^n \to \mathbb{R}^n$ of the valuation map when written in terms of coordinates $z_i$. The readers are referred to \cite{Yuan} for a detailed construction of the rigid analytic variety.

% Subsection 2.2
%-----------------------------------------------------------------------------------------% 
\subsection{Blowups of Surfaces and Wall-Crossing}
Recall that \cite[Proposition 1.3]{GHK} states that there exists a toric blowup $(\widetilde{X}, \widetilde{D})$  of $(X,D)$ which has a toric model $(\widetilde{X}, \widetilde{D}) \rightarrow (X_{\Sigma}, D_{\Sigma})$. This implies that any log Calabi-Yau surface $(X,D)$ can be obtained by a sequence of toric/non-toric blowups $\pi$ on the toric surface $(X_{\Sigma}, D_{\Sigma})$, followed by another sequence of toric blowdowns $\widetilde{\pi}$ on $(\widetilde{X}, \widetilde{D})$. Namely, for any $(X, D)$, one can find a diagram
\begin{center}
\begin{tikzcd}\label{eqn:looijenga}
  & (\widetilde{X}, \widetilde{D}) \arrow[ld, "\widetilde{\pi}"'] \arrow[rd,"\pi"] &   \\
(X, D) &                         & \left( X_{\Sigma}, D_{\Sigma} \right)
\end{tikzcd}
\end{center}
where $\pi$ is the sequence of blowups, and $\widetilde{\pi}$ is the sequence of blowdowns\footnote{The theorems and arguments throughout this paper and those from \cite{HK23} can be applied back and forth for the``zig-zag'' process in the proof of \cite[Proposition 1.3]{GHK}.}.\\

\subsubsection{The Toric Model} Let $\left( X_{\Sigma}, \omega_{\Sigma} \right)$ be a symplectic toric manifold of (real) dimension $2n$, i.e. equipped with an effective Hamiltonian half-dimensional torus $T^{n}$-action. Recall that the corresponding moment polytope $\Delta$ is given by the inequalities
\[
	\Delta = 
	\left\{\,
	\mathbf{x} = (x_{1}, \dots, x_{n}) \in \mathbb{R}
	\;\,\Big|\;\,
	\langle \mathbf{x}, \, \nu_{i} \rangle \geq - \lambda_{i},
	\; i = 1, \dots, N
	\,\right\},
\]
where $\nu_{i} \in \mathbb{Z}^{n}$ are the primitive rays of the toric fan, and $N$ is the number of facets, i.e. $(n-1)$-dimensional faces of the moment polytope $\Delta$. 

We will restrict ourselves to the case $n=2$ from this point forward, as we are interested in log Calabi-Yau \textit{surfaces}. We will further assume that $D_{\Sigma,1}$ and $D_{\Sigma,2}$ are parallel to the coordinate axis, that is, $\nu_{1} = (1,0)$ and $\nu_{2}=(0,1)$, and that $\lambda_{1}=\lambda_{2}=0$, unless stated otherwise.

The preimage of the $i$-th facet under the moment map is a (real) codimension $2$ submanifold of $X_{\Sigma}$, which is precisely the boundary divisor of $X_{\Sigma}$. Let us denote by $[D_{\Sigma,i}] \in \mathrm{H}_{n-2}\left( X_{\Sigma};\mathbb{Z} \right)$ for the homology class of the boundary divisor corresponding to a generator $\nu_{i}$. Then the cohomology class of the symplectic form $\omega$ is given by the linear combination
\[
	[\omega_{\Sigma}] = \sum_{i=1}^{N} \lambda_{i} [D_{\Sigma,i}]^{\vee} = \sum_{i=3}^{N} \lambda_{i} [D_{\Sigma,i}]^{\vee},
\]
where $[D_{\Sigma,i}]^{\vee} \in \mathrm{H}^{2}\left( X_{\Sigma}; \mathbb{Z} \right)$ is the Poincar\'e dual of the homology class $[D_{\Sigma,i}]$.

Recall that a \emph{basic disk} $\beta_{\nu_{j}} \in \mathrm{H}_{2}(X_{\Sigma},L)$ is a Maslov index $2$ holomorphic disk intersecting the toric divisor $D_{\Sigma,i}$ exactly once, without any sphere bubbles. If $X_{\Sigma}$ is \textit{Fano}, \cite{CO} gives us the formula 
\[
	W_{\Sigma} = \sum_{j=1}^{N}T^{\lambda_{j}}z^{\nu_{j}}
\]
for the potential, which coincides with the Hori-Vafa potential $W^{HV}_{\Sigma}$. If $X_{\Sigma}$ is not Fano (assuming weakly unobstructedness of Lagrangian fibers), the potential function can be written as
\begin{equation*}\label{eqn:toricpotential}
W_{\Sigma}
= \underbrace{\sum_{j=1}^{N}T^{\lambda_{j}}z^{\nu_{j}}}_{ W^{HV}_{\Sigma} } 
+ \sum N_{\beta} T^{\delta(\beta)}z^{\partial \beta},
\end{equation*}
where the latter summand runs over rest of the Maslov index $2$ disks with sphere bubbles (in which case the disk component might have higher Maslov index). There are only a handful of cases where $W_{\Sigma}$ can be explicitly written when $X_{\Sigma}$ is non-Fano, some notable examples can be found in \cite{chan-lau} and \cite{auroux09}.

\subsubsection{Non-toric Blowups of $\left( X_{\Sigma},D_{\Sigma} \right)$}

Now suppose that $\widetilde{X}$ is obtained from $X_{\Sigma}$ after a sequence of non-toric blowups. Let us denote by $p_{j} \in X_{\Sigma}$ for the blowup centers, and $E_{j} \subset \widetilde{X}$ for their associated exceptional divisors. We do not distinguish the toric divisors $D_{\Sigma, j}$ and their proper transformation for notational simplicity. We equip $X$ with the symplectic form $\widetilde{\omega}$, in the class
\begin{equation}\begin{aligned}\label{eq:sympform}
[\widetilde{\omega}] 	&= \pi^{\ast}[\omega_{\Sigma}]  - \sum_{j=1}^{k} \epsilon_{j}[E_{j}]^{\vee}\\
		&= \sum_{i = 3}^{N} \lambda_{i}\pi^{\ast}[D_{\Sigma,i}]^{\vee} - \sum_{j=1}^{k} \epsilon_{j}[E_{j}]^{\vee},
\end{aligned}\end{equation}
where $\epsilon_{j}$'s are generic and $0 < \epsilon_{j}\ll 1$ (see \ref{assumption}).
Each non-toric blowup center yields a single nodal fiber in the torus fibration, which results in a \textit{wall} - a codimension $1$ family of Lagrangians which bounds a holomorphic disk of Maslov index $0$. The special Lagrangian torus fibration is constructed on the divisor complement, away from a small neighborhood of each branch cut, by gluing in local models introduced in \cite{auroux09} and \cite{BCHL}. If $z_{2}$ denotes the coordinate induced from the boundary class of the associated Maslov 0 disk, then the \textit{wall-crossing formula} is given as
\begin{equation*}\label{eqn:wcformualepsilon}
\begin{array}{l}
 z_{1} = z_{1}' (1+ T^{-\epsilon_{j}} z_{2}') \\
 z_{2} = z_{2}'
 \end{array}.
 \end{equation*}
The nontrivial monodromy of the affine structure near the nodal fiber (singularity of the affine structure) can be pushed to a single branch cut, which is usually taken be the ray from the singular fiber toward infinity.\\

\begin{remark}
Although collisions of walls result in complicated wall structures in general, the location of each blowup center can be carefully chosen (see \cite[Lemma 2.5]{HK23} for the precise convention) so that there exists a \textit{central chamber} $R$, surrounded only by initial walls (referring to those before scattering). Throughout, we fix our potential $W = W_{R}$ to be the potential valid over this central chamber. The constraint on the location of the blowups can be later lifted via analytic continuation of $W$ as in \cite{Yuan} (this is possible because the Lagrangian fibers are weakly unobstructed by \cref{lemma:unobstructedness}), and the potential can be thought of as a global function, at least away from a codimension $1$ region that cannot be covered (see \cref{rmk:critpointsoutside}).\\
\end{remark}

A \emph{broken disk} $\beta \in \mathrm{H}_{2}\left( \widetilde{X}, L_{u} \right)$ is a Maslov index $2$ pseudo-holomorphic disk belonging to the class
\[
	\qquad
	\widetilde{\beta}_{\nu_{j}} + \sum_{i=1}\widetilde{\beta}_{i}, 
	\qquad
	\widetilde{\beta}_{\nu_{j}},\, \widetilde{\beta}_{i} 
	\in \mathrm{H}_{2}\left( \widetilde{X}, L_{u} \right),
\]
where tildes are used to distinguish disks before the blowup process from their proper transformation after the blowup process. $\widetilde{\beta}_{\nu_{j}}$ is the Maslov index $2$ component of a broken disk, representing the proper transformation of the basic disk $\beta_{\nu_{j}}$ intersecting $D_{\Sigma, \nu_{j}}$ away from the non-toric blowup centers. $\widetilde{\beta}_{i}$'s are the Maslov index $0$ components, each representing the proper transformation of the disk precisely passing through a blowup point $p_{i}$ in $D_{\Sigma}$. It follows from \cref{eq:sympform} that the energy of a broken disk $\beta = \widetilde{\beta}_{\nu_{j}} + \sum_{i=1}\widetilde{\beta}_{i}$ is given by
\begin{equation*}\begin{aligned}
\widetilde{\omega }( \beta) 
	&= \widetilde{\omega}\left( \widetilde{\beta}_{\nu_{j}} \right) 
	+ \sum_{i=1} \widetilde{\omega}\left( \widetilde{\beta}_{i} \right)  \\
	&= \omega_{\Sigma}\left( \beta_{\nu_{j}} \right) + \sum_{i=1} \omega_{\Sigma}\left( \beta_{i} \right)
	- \sum_{i=1} \epsilon_{i}.
\end{aligned}\end{equation*}
Among such broken disks, \emph{basic broken disks} are those which break at parallel walls stemming from a single divisor $D_{\Sigma, j+1}$. More precisely, a basic broken disk is a broken disk of class
\[
	\widetilde{\beta}_{\nu_{j}} + l \widetilde{\beta}_{\nu_{j+1}}.
\]
We denote by $\mathcal{A}_{j}$ for the set consisting of such disks, arising from the two divisors $D_{\nu_{j}}$ and $D_{\nu_{j+1}}$. From \cref{eq:sympform}, the energy of such basic broken disks are given by 
\begin{equation*}
\omega_{\Sigma}\left( \beta_{\nu_{j}} \right) + l \omega_{\Sigma}\left( \beta_{\nu_{j+1}} \right) - \sum_{i=1} \epsilon_{i}.
\end{equation*}
From now on, we do not distinguish the disks $\beta_{\nu_{j}}$ in $X_{\Sigma}$ and their proper transformation $\widetilde{\beta}_{\nu_{j}}$ in $\widetilde{X}$ to avoid unnecessarily complicated notations.

% Subsection 2.3
%-----------------------------------------------------------------------------------------% 
\subsection{Critical points of the Landau-Ginzburg Model}

The relationship between critical points of Laurent polynomials and its tropicalization/Newton polytope is investigated in Section 4 of \cite{HK23}. We recall some basic definitions and results. Note that, although \cref{def:newtonpolytope} and \cref{Kushnirenko} are presented for general dimension $n$, we limit our discussion to the case $n=2$.\\

\begin{defn}\label{def:newtonpolytope}
Let $W =  \sum_{v \in \mathbb{Z}^{n}} \alpha_{v}z^{v} \in \Lambda[z_{1}^{\pm}, \ldots , z_{n}^{\pm}]$ be a Laurent polynomial with coefficients in the Novikov field.
\begin{enumerate}
\item
The \emph{Newton polytope} $\Delta_{W}$ of $W$ is defined as the convex hull of the support of $W$, 
\[
	\supp W := \{\,  v \in \mathbb{Z}^{n}   \;\, \rvert \;\,  \alpha_{v} \neq 0  \,\}.
\]

\item
A Laurent polynomial $W$ is said to be \emph{convenient} if the point $0 \in \mathbb{R}^{n}$ does not belong to any supporting plane of all $d$-dimensional faces  of $\Delta_{W}$ for $1\leq d \leq n-1$. 

\item
A Laurent polynomial $P$ is \emph{non-degenerate} if for any closed face $F$ of $\Delta_{P}$, the system 
\[
	\Bigl( z_{1}\frac{\partial W_F}{\partial z_{1}}\Bigl)_{F} = \dots  = \Bigl( z_{n}\frac{\partial W_F}{\partial z_{n}}\Bigl)_{F}=0
\]
has no solution in $(\Lambda^\times)^n$, where $W_{F} := \sum_{v \in F \cap \mathbb{Z}^{n}}\alpha_{v}z^{v}$ for a closed subset $F$ of $\mathbb{R}^{n}$.

\item
The \emph{tropicalization} of $W$ is a polyhedral complex in $\mathbb{R}^n$ (e.g., a piecewise linear graph on $\mathbb{R}^2$ when $n=2$) given as the corner locus of the piecewise linear function 
\[
	\tau_W : \mathbb{R}^n \to \mathbb{R}, \quad \,\,
	(x_{1},\cdots, x_{n}) \mapsto \min_{i} \, 
	\Bigl\{\,   \lambda_{i} + \left\langle v_{i} \, , \, (x_{1},\cdots,x_{n}) \right\rangle    \;\, \rvert \;\,  i \in \support W  \,\Bigr\}
\]
where $\lambda_i = \val (a_i)$. We denote the tropicalization of $W$ by $\mathrm{Trop}(W)$.

\item
The \emph{Newton subdivion} $\mathcal{S}_{W}$ of the Newton polytope $\Delta_{W}$ is the subdivision of the Newton polytope into lattice polygons, which is dual to the tropicalization $\mathrm{Trop}(W)$.

\item
A Laurent polynomial $W$ is said to be \emph{locally convenient} if any subcollection of monomials in $W$ determined by a cell in the subdivision $\mathcal{S}_W$ forms a convenient Laurent polynomial. 
\end{enumerate}
\end{defn}

\begin{thm}\cite[Theorem III]{Kush} \label{Kushnirenko}
Let $\mathbb{K}$ be an algebraically closed field with characteristic 0. If a convenient Laurent polynomial $W \in \mathbb{K}[z_{1}^{\pm}, \dots, z_{n}^{\pm}]$ is non-degenerate, then
\[
\rvert \mathrm{Crit}(W) \rvert = n!V_{n}(\Delta_{W})
\]
where $\rvert \mathrm{Crit}(W) \rvert$ is the number of critical points of $W$ counted with multiplicity, and $V_{n} (\Delta_W)$ is the $n$-dimensional volume of the Newton polytope $\Delta_W$.
\end{thm}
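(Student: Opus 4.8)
The plan is to prove this directly by intersection theory on the toric variety attached to $\Delta_W$, since the statement is itself the Bernstein--Kushnirenko count and therefore cannot be reduced to it. The starting observation is that the critical points of $W$ in $(\mathbb{K}^\times)^n$ are exactly the common zeros of the $n$ logarithmic derivatives
\[
	f_i := z_i \frac{\partial W}{\partial z_i} = \sum_{v \in \mathrm{supp} W} v_i\,\alpha_v\, z^v, \qquad i = 1, \dots, n.
\]
Each $f_i$ is a Laurent polynomial whose support lies in $\mathrm{supp} W$, hence whose Newton polytope is contained in $\Delta_W$. The whole problem is thus to count, with multiplicity, the simultaneous solutions of $f_1 = \dots = f_n = 0$ in the algebraic torus, and to show the count equals $n!\,V_n(\Delta_W)$.

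First I would set up the toric compactification. Let $X_\Sigma$ be the projective toric variety whose fan $\Sigma$ is the normal fan of $\Delta_W$, so that $(\mathbb{K}^\times)^n \hookrightarrow X_\Sigma$ is the dense torus and $\Delta_W$ is the polytope of an ample line bundle $\mathcal{L}$, with $H^0(X_\Sigma, \mathcal{L})$ spanned by the monomials $z^v$, $v \in \Delta_W \cap \mathbb{Z}^n$. Since $\mathrm{supp} f_i \subseteq \Delta_W \cap \mathbb{Z}^n$, each $f_i$ extends to a global section $s_i \in H^0(X_\Sigma, \mathcal{L})$; equivalently, the logarithmic differential $\sum_i f_i\, \frac{dz_i}{z_i}$ is a section of $\mathcal{L} \otimes \Omega^1_{X_\Sigma}(\log \partial X_\Sigma)$, and the frame $\{dz_i/z_i\}$ trivializes the logarithmic cotangent sheaf on the torus. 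The critical locus is then $\{s_1 = 0\} \cap \dots \cap \{s_n = 0\}$, and the target quantity $n!\,V_n(\Delta_W) = \int_{X_\Sigma} c_1(\mathcal{L})^n$ is exactly the standard volume formula for the top self-intersection of an ample toric line bundle.

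Next I would use the two hypotheses to confine every solution to the open torus. The boundary $X_\Sigma \setminus (\mathbb{K}^\times)^n$ is stratified by orbit closures $X_F$, one for each face $F$ of $\Delta_W$, and the restriction of $s_i$ to $X_F$ is governed by the facial polynomial $(z_i \partial_{z_i} W_F)_F$. The non-degeneracy hypothesis says precisely that for every closed face $F$ the facial system $(z_1\partial_{z_1}W_F)_F = \dots = (z_n\partial_{z_n}W_F)_F = 0$ has no solution in $(\mathbb{K}^\times)^n$, which translates into the statement that $s_1, \dots, s_n$ have no common zero on any boundary orbit; the convenient hypothesis is what guarantees that none of the facial polytopes degenerates to a point, so that the restricted bundle $\mathcal{L}|_{X_F}$ stays ample and the facial restriction genuinely records the boundary behavior rather than collapsing. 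Together these force $\{s_1 = \dots = s_n = 0\}$ to be a finite subset of the torus intersecting properly, so that Bézout's theorem on the complete variety $X_\Sigma$ yields
\[
	\lvert \mathrm{Crit}(W) \rvert = \int_{X_\Sigma} c_1(\mathcal{L})^n = n!\,\mathrm{vol}(\Delta_W) = n!\,V_n(\Delta_W),
\]
with all intersection points lying in $(\mathbb{K}^\times)^n$ and counted with their scheme-theoretic multiplicity.

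The hard part will be the boundary analysis in the third step: making rigorous that Bernstein non-degeneracy forces the sections $s_i$ to miss every toric stratum, so that the projective Bézout number is realized entirely inside the torus with no correction terms at infinity. This requires an orbit-by-orbit computation identifying the leading term of $s_i$ along $X_F$ with the facial derivative $(z_i\partial_{z_i}W_F)_F$, together with the use of convenience to exclude the degenerate situation where a supporting hyperplane of a lower-dimensional face passes through the origin and makes the facial bundle drop rank. Once the boundary vanishing and the properness of the intersection are in place, the remaining equality $\int_{X_\Sigma} c_1(\mathcal{L})^n = n!\,\mathrm{vol}(\Delta_W)$ is the routine toric volume computation, and the theorem follows.
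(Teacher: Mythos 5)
This statement is not proved in the paper at all: it is Kushnirenko's theorem, imported by citation as \cite[Theorem III]{Kush} and used as a black box. So there is no in-paper argument to compare against; what you are attempting is a from-scratch proof of a classical result. Your outline --- identify $\mathrm{Crit}(W)$ with the common zeros of $f_i = z_i\,\partial W/\partial z_i$, compactify on the toric variety $X_\Sigma$ of the normal fan of $\Delta_W$, regard the $f_i$ as sections $s_i$ of the polytope line bundle $\mathcal{L}$, use non-degeneracy to exclude common zeros on the toric boundary, and conclude via $\int_{X_\Sigma} c_1(\mathcal{L})^n = n!\,V_n(\Delta_W)$ --- is indeed the standard route to the Bernstein--Kushnirenko count, and as a skeleton it is sound.

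The genuine gap sits exactly in the step you defer, and your stated reason for needing convenience is wrong, which suggests the deferred analysis would go astray. Ampleness of $\mathcal{L}|_{X_F}$ is automatic (the restriction of an ample bundle to any subvariety is ample) and has nothing to do with convenience, nor is a facial polytope ``degenerating to a point'' a problem --- every vertex does that. The real issue is that $\mathrm{supp}(f_i)$ is strictly \emph{smaller} than $\mathrm{supp}(W)$: the operator $z_i\partial_{z_i}$ kills every lattice point with $v_i=0$. Hence the face part $(f_i)_F$ --- which is precisely what $s_i$ restricts to on the orbit closure $X_F$ --- can vanish \emph{identically}, namely whenever the face $F$ lies in the coordinate hyperplane $\{v_i=0\}$. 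In that case $s_i\equiv 0$ on an entire boundary stratum, the zero scheme acquires excess components at infinity unless the remaining sections save the day, and the clean dictionary ``non-degeneracy $\Leftrightarrow$ no boundary zeros'' must be re-examined in this degenerate regime. Supporting hyperplanes of positive-dimensional faces passing through the origin is exactly what convenience forbids, and that is where it enters the proof; this is not a fussy technicality, since for $W = 1 + z_1 + z_2$ (origin a vertex of $\Delta_W$) there are no critical points at all while $n!\,V_n(\Delta_W)=1$, and for $W = z_1(z_2-1)^2$ (Newton polytope a segment) the critical locus is infinite while $V_2=0$; any correct proof must isolate the step at which such examples are excluded, and yours does not. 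Two smaller but real omissions: finiteness of the zero scheme should be deduced from ``closed in a complete variety and contained in the affine torus, hence complete and affine, hence finite,'' and the Bézout step on the (in general singular) variety $X_\Sigma$ requires that the local lengths add up to $\int c_1(\mathcal{L})^n$, which uses that toric varieties are Cohen--Macaulay so that $(s_1,\dots,s_n)$ is locally a regular sequence; neither can be waved through as ``routine'' without mention.
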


 Now suppose that $\alpha$ is a critical point of the potential $\widetilde{W}$ of $(\widetilde{X}, \widetilde{D})$, obtained from a toric model $\left( X_{\Sigma}, D_{\Sigma} \right)$ through a sequence of blowups. If $\mathrm{val}(\alpha) = (x_{0}, y_{0})$, we can write
\[
	\alpha = \left( T^{x_{0}}\underline{z}_{1}, T^{y_{0}}\underline{z}_{2} \right), 
	\qquad
	\underline{z}_{1}, \underline{z}_{2}  \in \Lambda_{U}
\]
and consider the restriction of $\widetilde{W}$ at the $\left( \Lambda_{U} \right)^{2}$-fiber over $\alpha$, which we denote by $\widetilde{W}_{\alpha}$. Then $\widetilde{W}_{\alpha}$ decomposes into the sum $W_{0} + W_{1}$, where $W_{0}$ consists of leading order terms of $\widetilde{W}_{\alpha}$, that is, terms with minimal valuation coefficients. Then by energy induction (see \cite[Theorem 10.4]{FOOO-T} or \cite[Theorem 4.37]{FW}), assuming that the $\mathrm{Hess}_{\alpha}(W_{0})$ is non-vanishing, each critical point of $W_{0}$ extends uniquely in a valuation-preserving manner to a critical point of $W$.

The tropicalization $\mathrm{Trop}(\widetilde{W})$ then confines the possible locations of critical points by determining how many terms $W_{0}$ has; if a point projects to a point $u \notin \mathrm{Trop}(\widetilde{W})$ under the valuation map, then $W_{0}$ consists of a single monomial, which attain the minimum valuation at $u$. Kushnirenko's\cref{Kushnirenko} then implies that $u$ cannot be a critical point of $\widetilde{W}$. Similarly, if $u$ lies on the interior of an edge $E$ of $\mathrm{Trop}(\widetilde{W})$, then $W_{0}$ contains two terms, and therefore $u$ cannot be a critical point of $\widetilde{W}$, provided that the edge $E$ is convenient.\\

\begin{remark}\label{rmk:nonconvenient}
The presence of non-convenient cells in the Newton subdivision poses several challenges, particularly in applying the energy induction on the leading order potential $W_{0}$. Fortunately, for generic choices of $\epsilon_{j}$ and $\lambda_{j}$, the Newton subdivision of the minimal potential (see \cref{rmk:minpotential}) is a locally convenient unimodal triangularization, apart from the possibility of two non-convenient $2$-cells sharing the unique edge containing the origin. For such non-convenient cells, detailed analysis of the leading order terms is carried out separately (see \cite[Lemma A.]{HK23}).\\ 
\end{remark}

Therefore, a critical point $\alpha$ of $\widetilde{W}$ always projects onto a vertex $V \in \mathrm{Trop}( \widetilde{W} )$ under $\mathrm{val} : \widetilde{X} \mathlarger{\mathlarger{\backslash}} \widetilde{D} \rightarrow B$, unless the Newton subdivision is non-convenient in which case $\alpha$ might map onto an edge $E \in \mathrm{Trop}( \widetilde{W} )$. In other words, for every critical point $\alpha$ of $\widetilde{W}$, there exists at least three (two if $\alpha$ projects onto a non-convenient edge) distinguished classes $\beta_{1}, \dots, \beta_{m \geq 3} \in H_{2}\big( \widetilde{X}, L_{\mathrm{val}(\alpha)} \big)$ supporting Maslov index $2$ disks which simultaneously attains minimum energy at $z= \alpha$. We refer to such disks as \emph{energy minimizing disks} of $\alpha$.

On the other hand, recall that we are only interested in \emph{geometric critical points}, which project onto a point in the interior of the moment polytope $\Delta$ of $\widetilde{X}$. We denote by $\mathrm{Crit}( \widetilde{W} )$ the set consisting of all geometric critical points of $\widetilde{W}$. The following Proposition classifies Maslov index $2$ disks based on whether they can become energy minimizers of a geometric critical point.

\begin{prop}[\cite{HK23} Proposition 4.2]\label{prop:basicbrokendisks}
Suppose that $(\widetilde{X}, \widetilde{D})$ is obtained from a toric model $\left( X_{\Sigma}, D_{\Sigma} \right)$ after a sequence of non-toric blowups. A critical point of $\widetilde{W}$ is geometric only if its minimizing disks are either basic disks, or basic broken disks. In particular, disks with sphere bubbles cannot be energy minimizing disks.
\end{prop}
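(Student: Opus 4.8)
The plan is to reduce the statement to a purely energy-theoretic comparison among the three types of Maslov index $2$ disks. Recall from the discussion preceding the proposition that the energy minimizing disks of a critical point $\alpha$ are precisely the classes indexing the monomials of the leading order potential $W_{0}$ at the $(\Lambda_{U})^{2}$-fiber over $\alpha$: after substituting $z_{i} = T^{x_{0}}\underline{z}_{i}, T^{y_{0}}\underline{z}_{2}$ with $\underline{z}_{i} \in \Lambda_{U}$, a term $T^{\int_{A_{\partial\beta}}\omega}z^{\partial\beta}$ acquires coefficient valuation $\int_{A_{\partial\beta}}\omega + \langle \partial\beta, u\rangle = \widetilde{\omega}(\beta)$, so the minimal valuation terms of $\widetilde{W}_{\alpha}$ are exactly the Maslov index $2$ classes $\beta$ (with $N_{\beta}\neq 0$) of minimal disk energy over $u = \mathrm{val}(\alpha)$. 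Since two disks with the same boundary class $\partial\beta = c$ feed the same monomial $z^{c}$, only the lowest-energy disk in each fixed boundary class can enter $W_{0}$. It therefore suffices to show (i) that no sphere-bubbled configuration realizes this per-class minimum, and (ii) that, over an interior point, the surviving minimizers are basic disks or basic broken disks.

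First I would set up the energy bookkeeping from \cref{eq:sympform}. For a disk carrying a sphere bubble $S$, holomorphicity forces $\widetilde{\omega}(S) > 0$, and this area is bounded below by a fixed positive quantity independent of the blowup parameters $\epsilon_{j}$. I would then produce, for the boundary class $c = \partial\beta$ of such a disk, a competing basic or basic broken disk with the same boundary: attaching or detaching the Maslov $0$ wall-crossing components $\widetilde{\beta}_{i}$ alters the energy only by the small generic quantities $\epsilon_{j}$, whereas removing the sphere changes it by the fixed positive area $\widetilde{\omega}(S)$. Under the smallness assumption $0 < \epsilon_{j}\ll 1$ of \ref{assumption}, the sphere area dominates any $\epsilon$-savings, so the competitor is strictly lower in energy and the sphere-bubbled disk is excluded from $W_{0}$.

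The first clause then follows by the interior hypothesis. Having discarded sphere bubbles, the only Maslov index $2$ disks that can attain the minimum of $\tau_{\widetilde{W}}$ are basic disks and basic broken disks; the requirement that $u$ lie in the \emph{interior} of the moment polytope $\Delta$ of $\widetilde{X}$ is what guarantees the minimizing monomials form the lower envelope reproducing the facet structure of $\Delta$ (the basic disks) as corrected near the initial walls (the basic broken disks), rather than any exotic boundary class carried only by a sphere-bubbled term. Genericity of $\lambda_{j}$ and $\epsilon_{j}$ in \ref{assumption} is invoked here to rule out accidental coincidences of energy that could otherwise let a non-basic class tie for the minimum.

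The main obstacle I anticipate is the per-class comparison in step (i) when the sphere bubble has nonpositive Chern number: then the disk component has Maslov index $\geq 4$, so one cannot simply delete the sphere to obtain a Maslov $2$ competitor with the same boundary. Resolving this requires expressing $c = \partial\beta$ in terms of the basic classes $\beta_{\nu_{j}}$ and the Maslov $0$ classes $\widetilde{\beta}_{i}$, and then verifying directly from \cref{eq:sympform} that the minimal-energy Maslov $2$ class with boundary $c$ is realized by a basic or basic broken disk. This is where the interplay between the fixed positive sphere areas and the small, generic $\epsilon_{j}$ must be controlled uniformly, and it is the step on which the argument genuinely rests.
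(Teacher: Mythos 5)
There is a genuine gap, and it comes from trying to prove a stronger statement than the proposition actually makes. The claim is only that the minimizing disks of a \emph{geometric} critical point are basic or basic broken; it does not say that sphere-bubbled classes never occur as leading-order terms of $\widetilde{W}$ anywhere. Indeed they must occur somewhere: by \cref{rmk:minpotential}, monomials of $\widetilde{W}$ not contained in $W_{\mathfrak{min}}$ do produce (non-geometric) critical points, and at such a critical point those monomials are by definition among the energy minimizers; this is precisely why the Kushnirenko count (\cref{Kushnirenko}) over the full Newton polytope overshoots the geometric count, and why the star-shaped Newton polytope is introduced at all. So your step (i) --- no sphere-bubbled configuration ever realizes the per-class minimum, hence such terms never enter $W_{0}$ --- is false as stated, and the obstacle you flag at the end (a bubble $S$ with $c_{1}(S) \leq 0$, disk component of Maslov index $\geq 4$) is not a technical wrinkle but the point where the strategy collapses: for such a class, e.g. one with boundary $\partial\beta_{\nu_{i}} + \partial\beta_{\nu_{j}}$, there is in general \emph{no} basic or basic broken disk with the same boundary class, so there is no competitor to compare against, and the sphere-bubbled term genuinely is the leading term over some region of $\mathbb{R}^{2}$ --- just a region lying at $\lambda$-level distances outside the moment polytope. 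Your uniform energy bound is also incorrect: exceptional spheres $E_{j}$ have area exactly $\epsilon_{j}$, not bounded below independently of the parameters; for instance the total transform $\widetilde{\beta}_{i} + [E_{i}]$ has the same boundary \emph{and the same energy} as the basic disk $\beta_{\nu_{j+1}}$, so no strict domination by a fixed sphere-area gap is available there either.

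The argument of \cite[Proposition 4.2]{HK23} runs in the contrapositive direction and is about the \emph{location} of the critical point, not about excluding terms from the potential: assume a critical point $\alpha$ has a minimizing disk of excessive energy (sphere bubbles contribute areas of order $\lambda_{j}$, and similarly for non-basic broken configurations), write down the system of conditions that simultaneous energy-minimality at $u = \mathrm{val}(\alpha)$ imposes --- each minimizing class $\beta$ contributes the condition that $\delta(\beta) + \langle \partial\beta, u \rangle$ attains the common minimum --- and solve these inequalities to show that any solution $u$ lies outside the moment polytope $\Delta$. Hence such an $\alpha$ is non-geometric, which is exactly the proposition. Your step (ii) silently assumes the conclusion of this analysis (that over an interior point the lower envelope is the one given by basic and basic broken disks); that is the content which has to be proved, and it is proved by these inequalities, not by genericity of the $\lambda_{j}$ and $\epsilon_{j}$.
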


\begin{remark}\label{rmk:minpotential}
\cref{prop:basicbrokendisks} allows us to consider only the \textit{minimal potential}, consisting of only the basic disks and basic broken disks;
\begin{equation}\label{eq:minpotential}
W_{\mathfrak{min}}
= \underbrace{\sum_{j=1}^{N}T^{\lambda_{j}}z^{\nu_{j}}}_{ W^{HV}_{\Sigma} }+ \sum_{j=1}^{N} \sum_{\pi^{\ast}(\beta) \in \mathcal{A}_{j}} N_{\beta} T^{\delta(\beta)}z^{\partial \beta}.
\end{equation}
Since a monomial of $W$ which is not contained in $W_{\mathfrak{min}}$ only produces non-geometric critical points, we do not distinguish $W$ and $W_{\mathfrak{min}}$ when there is no danger of confusion.\\
\end{remark}

\cref{prop:basicbrokendisks} is proven by solving inequalities arising from the energy minimizing disks. If an energy minimizing disk has ``excessive'' energy, its associated critical point must lie outside of the moment polytope. Moreover, using analogous arguments, it can be shown that the \textit{star-shaped Newton polytope}, formed by connecting adjacent toric lattices and $\mathcal{A}_{j}$'s without taking the convex hull, captures \textit{all} geometric critical point information of $\widetilde{W}$. For generic parameters, the Newton subdivision of the star-shaped Newton polytope of the (minimal) potential is a locally convenient unimodal triangulation \cite[Lemma 4.9]{HK23}, which we will assume throughout. Explicitly :

\begin{assumption}{Assumption}\label{assumption}
The (symplectic) parameters $0 < \epsilon_{j}\ll 1$ and $0 \ll \lambda_{j}$ are generic, in the sense that the Newton subdivision of the (minimal) potential is a locally convenient unimodal triangularization, apart from the central cell(s) containing the origin. Equivalently, every vertex of the tropicalization of the potential is trivalent with weight $1$.
\end{assumption}

Our discussion so far can be summarized as a complete criterion for geometric critical points :
\begin{prop}[\cite{HK23}]\label{prop:summary}
Let $(\widetilde{X}, \widetilde{D})$ be a log Calabi-Yau surface obtained via a sequence of blowups on the toric model $\left( X_{\Sigma}, D_{\Sigma} \right)$. Then for generic parameter, i.e. $\widetilde{\omega}$ satisfies \ref{assumption}, a critical point $\alpha$ of $\widetilde{W} = \widetilde{W}_{\mathfrak{min}}$ is a geometric critical point if and only if $\alpha$ lies over either
\begin{enumerate}[leftmargin=*]
\item
a vertex $V \in \mathrm{Trop}( \widetilde{W} )$ dual to a convenient $2$-cell of the Newton subdivision of the star-shaped Newton polytope, or
\item
an edge $E \in \mathrm{Trop}( \widetilde{W} )$ dual to the unique edge of the Newton subdivision that passes through the origin.
\end{enumerate}
Moreover, all geometric critical points are non-degenerate, that is, $\widetilde{W}$ is a Morse function. In particular, if $\alpha$ lies over a vertex $V \in \mathrm{Trop}( \widetilde{W} )$, then the dual $2$-cell $V^{\ast}$ is unimodal.
\end{prop}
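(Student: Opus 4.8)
The plan is to assemble the pieces already recorded in this section into the two implications of the equivalence, and then read off non-degeneracy. First I would reduce to the minimal potential: by \cref{prop:basicbrokendisks} and \cref{rmk:minpotential}, any monomial of $\widetilde{W}$ outside $W_{\mathfrak{min}}$ produces only non-geometric critical points, so no geometric critical point is lost by replacing $\widetilde{W}$ with $\widetilde{W}_{\mathfrak{min}}$ and working with its star-shaped Newton polytope. All subsequent arguments take place on this Newton subdivision, which by \ref{assumption} is a locally convenient unimodal triangulation away from the central cell(s) through the origin.

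For the forward implication, suppose $\alpha$ is a geometric critical point and set $u = \mathrm{val}(\alpha)$. I would rule out every location for $u$ other than a vertex dual to a $2$-cell or the non-convenient central edge, exactly as in the discussion preceding \cref{rmk:nonconvenient}. If $u \notin \mathrm{Trop}(\widetilde{W})$, the leading order part $W_{0}$ of $\widetilde{W}_{\alpha}$ is a single monomial, which has no critical point, so $u$ cannot support a critical point of $\widetilde{W}$. If $u$ lies in the interior of a convenient edge $E$, then $W_{0}$ consists of two monomials whose exponents are not collinear with the origin; such a binomial is convenient and non-degenerate, so Kushnirenko's \cref{Kushnirenko} (zero volume) again forbids a critical point. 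The only remaining possibilities are that $u$ is a vertex dual to a $2$-cell, or that $u$ lies on a non-convenient edge, which by \ref{assumption} is precisely the unique edge of the subdivision through the origin. This yields cases (1) and (2).

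For the reverse implication, suppose first that $\alpha$ lies over a vertex $V$ dual to a convenient $2$-cell $V^{\ast}$. By \ref{assumption}, $V^{\ast}$ is a unimodal (area $1/2$) triangle, so the leading order potential $W_{0}$ is a convenient, non-degenerate trinomial, and Kushnirenko's \cref{Kushnirenko} gives $\lvert \mathrm{Crit}(W_{0}) \rvert = 2! \cdot (1/2) = 1$. Non-degeneracy of $W_{0}$ forces $\mathrm{Hess}_{\alpha}(W_{0})$ to be non-vanishing at this point, so energy induction (\cite[Theorem 10.4]{FOOO-T}, \cite[Theorem 4.37]{FW}) lifts it uniquely and in a valuation-preserving way to a single critical point of $\widetilde{W}$ over $V$; it is geometric because $V$ projects into the interior of the moment polytope. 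For the central edge of case (2) the two leading monomials have exponents collinear with the origin, so $W_{0}$ is non-convenient and the clean Kushnirenko argument breaks down; here I would invoke the dedicated leading-order computation of \cite[Lemma A.]{HK23} to produce the critical point and establish its non-degeneracy directly.

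Non-degeneracy of every geometric critical point, and hence the Morse property, is then immediate: over a vertex it follows from the non-vanishing Hessian of the unimodal $W_{0}$ together with the fact that energy induction preserves non-degeneracy, and over the central edge it is part of the output of \cite[Lemma A.]{HK23}; the final unimodality claim for $V^{\ast}$ is simply the restatement of \ref{assumption} in the vertex case. The main obstacle is exactly this central cell: the Kushnirenko/unimodal mechanism applies verbatim only to convenient $2$-cells, whereas the two non-convenient cells sharing the edge through the origin have collinear leading exponents, so their minimum-valuation potential is degenerate in Kushnirenko's sense and must be analyzed by hand through the next-order terms.
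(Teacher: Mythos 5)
Your outline retraces the paper's own route: this proposition is presented there as a summary of \cite{HK23}, and the ingredients you assemble (reduction to the minimal potential via \cref{prop:basicbrokendisks} and \cref{rmk:minpotential}, the tropicalization/Kushnirenko dichotomy, energy induction over convenient unimodal cells, and \cite[Lemma A.]{HK23} for the central non-convenient cells) are exactly the ones the paper's Section 2 discussion points to.

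There is, however, one genuine gap, and it sits at the heart of the ``if'' direction. After producing, over a vertex $V$ dual to a convenient unimodal $2$-cell, a unique non-degenerate critical point of $\widetilde{W}$, you conclude that ``it is geometric because $V$ projects into the interior of the moment polytope.'' That clause is precisely the claim that needs proof, not a consequence of anything you have established: $\mathrm{Trop}(\widetilde{W})$ is just a polyhedral complex in $\mathbb{R}^{2}$, and whether one of its vertices lies in $\Delta^{\circ}$ is a quantitative question about the parameters $\epsilon_{j}$ and $\lambda_{j}$. In \cite{HK23} this is settled by explicit energy estimates (the same inequalities behind \cref{prop:basicbrokendisks}): cells spanned by basic and basic broken disks have dual vertices at $\epsilon$-scale distance from the corners of $\Delta$, hence inside, while all other cells of the convex-hull subdivision (those bridging different arms of the star-shaped polytope, or involving bubbled classes) have dual vertices at $\lambda$-scale distance outside. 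Without this estimate the equivalence is not closed in either direction: forward, \cref{prop:basicbrokendisks} only tells you that the minimizing monomials of a geometric critical point are basic or basic broken, not yet that the dual cell is a cell \emph{of} the star-shaped subdivision; and your forward argument also leaves open a geometric critical point over a vertex dual to a non-convenient central $2$-cell, which is neither case (1) nor case (2) --- excluding that (the critical points attached to the central cells live over the interior of the central edge) is part of what \cite[Lemma A.]{HK23} delivers and must be invoked explicitly. So the architecture of your proof is right, but the geometric localization --- the actual content of the star-shaped Newton polytope --- is asserted where it needs to be proven, or at least explicitly cited from the energy estimates of \cite{HK23}.
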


\vspace{0.3cm}
%-------------------------------  ⤊ ⤊ ⤊  Section 2  ⤊ ⤊ ⤊  -------------------------------%

%-------------------------------  ⤋ ⤋ ⤋  Section 3  ⤋ ⤋ ⤋  -------------------------------%

\section{Blowdown of Log Calabi-Yau Surfaces}\label{sec:3}
We now focus on the log Calabi-Yau surface $(X, D)$ obtained by blowing down toric divisors of $(\widetilde{X}, \widetilde{D})$ with self-intersection number $-1$. Our primary interest lies in the change in the number of geometric critical points resulting from these blowdowns. Although we have already established some notations and conventions in \cref{sec:2}, we will reiterate and clarify them to ensure this section is self-contained and comprehensible.

% Subsection 3.1
%-----------------------------------------------------------------------------------------% 

\subsection{Notations and Geometric Setup}
Let $(\widetilde{X}, \widetilde{D})$ be a log Calabi-Yau surface obtained from the toric model $\left( X_{\Sigma}, D_{\Sigma} \right)$ after a sequence of non-toric blowups with generic sizes $\epsilon_{j}$, and suppose that $D_{\Sigma, i_{0}}$ is a toric divisor of $\widetilde{X}$ with self intersection number $(-1)$. After appropriate coordinate changes and renumbering of the divisors, we assume that $i_{0}=2$, and further fix the inward normals and the energy coefficients by
\begin{equation}\label{eqn:convention1}
\nu_{1} = (1,0), \;\; \nu_{2}=(0,1), \qquad \lambda_{1}=\lambda_{2}=0.
\end{equation}
Let us also denote by
\begin{equation}\label{eqn:convention2}
k = \text{ the number of non-toric blowup centers on } D_{\Sigma, 3}. 
\end{equation}
The (minimal) potential $\widetilde{W}$ comprises of terms associated with \textit{basic disks} $\beta_{\nu_{i}}$ and \textit{basic broken disks} $\beta \in \mathcal{A}_{i}$, where
\begin{equation}\label{eqn:basicbrokendisks}
	\mathcal{A}_{i} := 
	\left\{\, 
	\beta_{\nu_{i}} + l \beta_{\nu_{i+1}}
	\,\, \Big\rvert \,\,
	1 \leq l \leq 
	 \# \text{ of non-toric blowup centers in } D_{\Sigma, i+1}.
	\,\right\}
\end{equation}
Locally, the potential can be written as follows:
\begin{equation}\begin{aligned}\label{eq:potential_before}
\widetilde{W}
&= \underbrace{\sum_{j=1}^{N}T^{\lambda_{j}}z^{\nu_{j}}}_{ W^{HV}_{\Sigma} }+ \sum_{j=1}^{N} \sum_{\pi^{\ast}(\beta) \in \mathcal{A}_{j}} N_{\beta} T^{\delta(\beta)}z^{\partial \beta}  \\
&= z_{1} + z_{2} + \sum_{\pi^{\ast}(\beta) \in \mathcal{A}_{1}} N_{\beta} T^{\delta(\beta)}z^{\partial \beta} \,+\, \cdots\\
&= z_{1} + z_{2} + \sum_{i=1}^{k} T^{-\sum_{j=1}^{i}\epsilon_{j}}z_{1}z_{2}^{i}  \,+\, \cdots.
\end{aligned}\end{equation}
We denote by
\begin{equation}\label{eqn:geometriccriticalpoints}
\mathrm{Crit}\left( W \right) = 
\left\{ \, 
\alpha \in \left( \Lambda^{\times} \right)^{2} 
\;\;\, \bigg| \;\;\;
\mathrm{val}(\alpha) \in \Delta^{\circ}, \quad
\frac{\partial W}{\partial z_{1}}\big|_{\alpha} = \frac{\partial W}{\partial z_{2}}\big|_{\alpha} = 0
\, \right\}
\end{equation}
the set of all \textit{geomtric} critical points, projecting onto the interior of the moment polytope.\\

% Subsection 3.2
%-----------------------------------------------------------------------------------------%
\subsection{The blowdown $(X, D)$ of $( \widetilde{X}, \widetilde{D} )$}

The blowdown process can be thought of as the reverse process of the blowup; that is, if $Z$ is a symplectically embedded $(-1)$-sphere in $( \widetilde{X}, \widetilde{\omega})$, the \emph{blowdown $(X, \omega)$ of $( \widetilde{X}, \widetilde{\omega} )$ along $Z$} is obtained by replacing a tubular neighborhood $\mathcal{N}(Z)$ of $Z$ with a solid ball $B(r)$ whose radius $r$ is determined by the symplectic form $\widetilde{\omega}$. The symplectic form $\omega$ is prescribed so that it is the standard symplectic form on the interior of the ball $B(r)$, while $\pi^{\ast}\omega = \widetilde{\omega}$ outside $\mathcal{N}(Z)$.

Alternatively, the blowdown process can be described as an example of the \textit{fiber connected sum}. We give a brief outline of the construction. Let $(M_{1}, \omega_{1})$ and $(M_{2}, \omega_{2})$ be two symplectic manifolds of the same dimension $2n$ and $(Q, \omega_{Q})$ be a compact symplectic manifold of dimension $2n-2$, with symplectic embeddings $\iota_{j} : Q \hookrightarrow M_{j}$, $j=1,2$. Let us denote by $N_{j}$ for the normal bundle of the image $Q_{j} = \iota_{j}(Q)$ in $M_{j}$ respectively. If the Euler classes $e(N_{1})$ and $e(N_{2})$ sum up to zero, the fiber connected sum $M_{1} \#_{Q} M_{2}$ of $M_{1}$ and $M_{2}$ along $Q$ is defined as follows: Let $\psi : N_{1} \rightarrow N_{2}$ be a fiber-orientation reversing bundle isomorphism, and let $V_{j}$ be the tubular neighborhood of $Q_{j}$ identified canonically with $N_{j}$. Then $M_{1} \#_{Q} M_{2}$ is given by
\begin{equation}\label{eqn:fiberconnectedsum}
M_{1} \#_{Q} M_{2} : =
\left( M_{1} \mathlarger{\mathlarger{\backslash}} Q_{1} \right)
\mathlarger{\cup}_{\varphi}
\left( M_{2} \mathlarger{\mathlarger{\backslash}} Q_{2} \right).
\end{equation}
Here, the gluing map $\varphi  : V_{1} \mathlarger{\mathlarger{\backslash}} Q_{1} \rightarrow V_{2} \mathlarger{\mathlarger{\backslash}} Q_{2}$ is the composition $\varphi = f \circ \psi $ where $f$ is a diffeomorphism that flips each punctured normal fiber inside out.

Returning back to our case in dimension $4$, we have $(M_{1}, \omega_{1}) = ( \widetilde{X}, \widetilde{\omega} )$, $(M_{2}, \omega_{2}) = \left( \mathbb{P}^{2}, \omega_{FS} \right)$, and $Z_{1} = D_{\Sigma, 2}$ with self-intersection $(-1)$. The normal bundle $N_{1}$ of $Z_{1} \subset \widetilde{X}$ is the tautological line bundle $L$, whereas the normal bundle $N_{2}$ of a symplectically embedded $(-1)$-sphere $Z_{2} \subset \mathbb{P}^{2}$ is the dual $L^{\ast}$. After rescaling $\omega_{FS}$ appropriately so that the symplectic areas of $Z_{1}$ and $Z_{2}$ match, the fiber connected sum $\widetilde{X} \#_{Z} \mathbb{P}^{2}$ is unique up to symplectomorphism. From our convention \eqref{eqn:convention1}, the cohomology class of $\omega$ is simply given by 
\begin{equation*}\begin{aligned}
[\omega]
&= \sum_{i \neq 1} \lambda_{i}\pi^{\ast}[D_{\Sigma,i}]^{\vee} - \sum_{j=1}^{k} \epsilon_{j}[E_{j}]^{\vee}\\
&=\sum_{i = 3}^{N} \lambda_{i}\pi^{\ast}[D_{\Sigma,i}]^{\vee} - \sum_{j=1}^{k} \epsilon_{j}[E_{j}]^{\vee} = [\omega],
\end{aligned}\end{equation*}
that is, the class of the symplectic form remains unchanged after the blowdown process.

This construction allows us to easily define a special Lagrangian torus fibration on the blowdown $X = \widetilde{X} \#_{Z} \mathbb{P}^{2}$. In \cite{BCHL}, a special Lagrangian torus fibration on $( \widetilde{X} \mathlarger{\mathlarger{\backslash}} \widetilde{D} )$ (with respect to the  K\"{a}hler form $\widetilde{\omega}$ which is equal to the pull back of $\omega_{\Sigma}$ outside a small neighborhood $U_{j}$ of each exceptional divisor $E_{j}$, while coinciding with an $S^{1}$-invariant local  K\"{a}hler form obtained by averaging over the $S^{1}$-action on $U_{j}$) is constructed. Locally, the fibration is given by 
\[
	(x, y) \mapsto \left( \lvert x \rvert, \mu_{S^{1}} (x, y) \right) \in \mathbb{R}^{2},
\]
where $\mu_{S^{1}}$ is the moment map of the for the $S^{1}$-action $(x,y) \mapsto (x, e^{i\theta}y)$. Similarly, we have  
\[
	(x', y') \mapsto \left( \lvert x' \rvert, -\mu_{S^{1}} (x', y') \right) \in \mathbb{R}^{2}
\]
for $(x', y') \in \mathbb{P}^{2} \mathlarger{\mathlarger{\backslash}} Z_{2}$. The two fibrations are then smoothly glued along their deleted tubular neighborghood $V_{1} \mathlarger{\mathlarger{\backslash}} Z_{1}$ and $V_{2} \mathlarger{\mathlarger{\backslash}} Z_{2}$ via $\varphi$. In the process, one should modify the (rescaled) symplectic form $\omega_{FS}$ on $\mathbb{P}^{2}$ so that it fiber-wise matches with the symplectic form $\widetilde{\omega}$ on $\widetilde{X}$ if necessary. See \cite{Gompf} for a precise description of the construction of the symplectic form $\widetilde{\omega}$ on fiber connected sums.

% Subsection 3.3
%-----------------------------------------------------------------------------------------%
\subsection{Weakly unobstructedness of $L_{u}$}

In order to make sense of the potential $W$ of $(X,D)$ after blowdown, we first show that Lagrangian fibers $L_{u}$ are weakly unobstructed :

\begin{lemma}\label{lemma:unobstructedness}
Let $(X, D)$ be a log Calabi-Yau surface. Then the Lagrangian fibers $L_{u}$ are weakly unobstructed, that is, for any $b \in H^{1}(L_{u}; \Lambda_{+})$, 
\[
	m_{0}(1) + m_{1}(b) + m_{2}(b, b) + \cdots = W(b) \cdot [L_{u}].
\]
\end{lemma}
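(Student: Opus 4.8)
The plan is to show that every homogeneous component of the left-hand side other than the one proportional to the unit $[L_u]$ must vanish, after which the coefficient of $[L_u]$ is forced to be the Maslov index $2$ generating function appearing in \eqref{eqn:Wfiber}. I would work in the de Rham (or canonical Morse) model for the filtered $A_\infty$-algebra of $L_u$, taking $b\in H^1(L_u;\Lambda_+)$ to be a harmonic one-form, and first reduce the deformed sum $\sum_k m_k(b,\dots,b)$ to the undeformed curvature. Since $L_u$ is a torus and $b$ has odd degree, the divisor axiom of \cite{FOOO} applies: each boundary insertion of $b$ contributes a holonomy factor, so that $\sum_k m_k(b,\dots,b)$ equals the curvature $m_0(1)$ computed with each disk class $\beta$ reweighted by $\exp\langle \partial\beta,\,b\rangle$. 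This is precisely the substitution producing the coordinates $z_i = T^{x_i}\,\mathrm{hol}_{f_i}\nabla$ and bringing in the potential $W$ of \eqref{eqn:Wzi}; gappedness of the $A_\infty$-algebra guarantees convergence in the $T$-adic topology.

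The second step is a dimension count. Because $D\in|-K_X|$ is anticanonical and $L_u\subset X\setminus D$, every holomorphic disk $\beta\in\pi_2(X,L_u)$ has Maslov index $\mu(\beta)=2\,(\beta\cdot D)\ge 0$. The moduli space $\mathcal M_1(\beta)$ of disks with one boundary marked point has expected dimension $n+\mu(\beta)-2=\mu(\beta)$ for $n=2$, so the evaluation pushforward $ev_*[\mathcal M_1(\beta)]$ lies in $H_{\mu(\beta)}(L_u)$. As $L_u$ is two-dimensional, this class vanishes when $\mu(\beta)>2$, is a multiple of the fundamental class $[L_u]$ when $\mu(\beta)=2$, and is a multiple of the point class when $\mu(\beta)=0$. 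Hence the $b$-deformed curvature decomposes as $m_0(1)=W(b)\,[L_u]+c(b)\cdot\mathrm{pt}$, where the $[L_u]$-term collects exactly the basic and basic broken disk contributions recorded by $W$, and the only potential obstruction is the point-class term $c(b)$ coming from Maslov index $0$ disks.

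The main obstacle is therefore to prove $c(b)=0$. My plan is to exploit that we only need weak unobstructedness over the interior of a chamber (in particular over the central chamber $R$ fixed before scattering): the walls are precisely the codimension-one locus of base points whose fiber bounds a Maslov index $0$ holomorphic disk, so for $u$ in a chamber interior the moduli spaces $\mathcal M_1(\beta)$ with $\mu(\beta)=0$ are empty and $c(b)=0$ identically. The delicate point is the region glued in from $\mathbb P^2$ during the fiber connected sum $\widetilde X\#_Z\mathbb P^2$: I would verify, using the $S^1$-invariant local K\"ahler model of \cite{BCHL} and \cite{auroux07} near $Z_2$, that the blowdown introduces no new Maslov index $0$ disks bounding $L_u$ over a chamber, so that the wall/chamber dichotomy is preserved under blowdown. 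Granting this, $m_0(1)$ is a multiple of $[L_u]$; combined with the divisor-axiom reduction of the first paragraph this gives $\sum_k m_k(b,\dots,b)=W(b)\,[L_u]$, and weak unobstructedness over the remainder of the base then follows from the analytic continuation of $W$ already invoked in \cref{sec:2}.
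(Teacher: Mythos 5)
There is a genuine gap, and it sits exactly where you flagged the ``delicate point.'' Your argument for $c(b)=0$ is geometric emptiness: off the walls no Maslov index $0$ disks exist, so the $\mu=0$ moduli spaces are empty. But the lemma is needed (and stated) for \emph{all} fibers $L_u$, including fibers on walls, where rigid Maslov index $0$ disks genuinely do exist --- that is what a wall \emph{is}. Your proposed repair, deducing weak unobstructedness of wall fibers ``from the analytic continuation of $W$,'' has the logical dependency backwards: analytic continuation is a statement about the mirror function on the rigid analytic space, not about the $A_\infty$-algebra of a particular Lagrangian, and in the paper it is weak unobstructedness (\cref{lemma:unobstructedness}) that \emph{licenses} the analytic continuation and the applicability of \cite{Yuan} (see \cref{rmk:critpointsoutside}), not the other way around. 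A second, related problem: your positivity claim $\mu(\beta)=2(\beta\cdot D)\ge 0$ is valid for genuine holomorphic disks, but the operations $m_{k,\beta}$ are built from compactified moduli of stable maps, and in the non-Fano surfaces at hand a Maslov $2$ disk with sphere bubbles of negative Chern number can represent a class with total $\mu(\beta)\le 0$; such configurations can be present even over chamber interiors, so emptiness of walls does not dispose of them either.

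The paper's proof avoids both issues by never arguing geometric emptiness at all. It is a virtual-dimension argument: for $\mu(\beta)\le 0$ the moduli space $\mathcal M_0(X,L_u,\beta)$ with \emph{no} marked points has virtual dimension $\mu(\beta)-1<0$, so its virtual fundamental chain can be perturbed to zero; compatibility of the Kuranishi perturbations with the forgetful maps $\mathfrak{forget}:\mathcal M_k \to \mathcal M_0$ (\cite{Fuk-Cyc}) then forces the virtual chains of $\mathcal M_k(X,L_u,\beta)$ to vanish for all $k\ge 0$ whenever $\mu(\beta)\le 0$ --- for every fiber, on or off a wall, and regardless of whether actual Maslov $\le 0$ configurations exist. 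The positive Maslov index contributions are then handled by the degree count you also use (output degree $2-\mu(\beta)$, so only $\mu(\beta)=2$ contributes a multiple of the unit). If you want to salvage your argument, the fix is to replace the chamber-interior emptiness step with this perturbation argument; your divisor-axiom reduction and the identification of the $\mu=2$ part with $W(b)$ are fine and essentially agree with the paper.
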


\begin{proof}
The virtual dimension of $\mathcal{M}_{k}(X, L_{u}, \beta)$ is given by
\begin{equation}\label{eqn:virdim}
\mathrm{dim} \left( L_{u} \right)  + \mu(\beta) + k -3 = \mu(\beta) + k -1.
\end{equation}
If $k=0$, then $\mu(\beta) + k -1 < 0$ for $\mu(\beta) \leq 0$, implying that the moduli space $\mathcal{M}_{0}(X, L_{u}, \beta)$  is empty after perturbing the Kuranishi structure when $\mu(\beta) \leq 0$. Hence for $\beta$ with $\mu(\beta) \leq 0$, the virtual fundamental chain of $\mathcal{M}_{0}(X, L_{u}, \beta)$ can be taken to be zero, and using compatibility of the forgetful map
\[
	\mathfrak{forget} : \mathcal{M}_{1}(X, L_{u}, \beta) \rightarrow \mathcal{M}_{0}(X, L_{u}, \beta)
\]
with the Kuranishi structure (\cite[Theorem 3.1, Corollary 3.1]{Fuk-Cyc}), the virtual fundamental chain of $\mathcal{M}_{k}(X, L_{u}, \beta)$ can also be taken to be zero after Kuranishi perturbation for all $k \geq 0$ when $\mu(\beta) \leq 0$.
For disks with strictly positive Maslov index, the standard degree argument shows that the degree of $m_{k, \beta} (b, \dots, b) = 2- \mu(\beta)$ is non-negative only if $\mu(\beta) = 2$, in which case is zero, i.e. a multiple of $[L_{u}]$.
\end{proof}

\begin{remark}\label{rmk:critpointsoutside}
Note that \cref{lemma:unobstructedness} holds due to dimension reasons, and is true for any compact surface $X$, as long as \cite[Theorem 3.1, Corollary 3.1]{Fuk-Cyc} is applicable.
The proof of \cref{lemma:unobstructedness} shows that Maslov index $0$ disks disappear after Kuranishi perturbation. That is, log Calabi-Yau surfaces satisfy \cite[Assumption 1.2]{Yuan}, and \cite[Theorem 1.3]{Yuan} gives us a rigid analytic variety $\check{Y}$ along with a global mirror LG model $W$, obtained by gluing local charts via wall-crossing formula. We can then fix an expression of $W$ at a single chamber, and simply enlarge the domain through analytic continuation.

Although there exists a codimension $1$ region which cannot be covered by analytic continuation, following \cite{HKL} and \cite{HK23}, we can introduce additional mirror charts induced by the Floer deformation space of the nodal fiber, and associate critical points lying outside the initial chamber (but within the moment polytope) with geometric objects.

\end{remark}

% Subsection 3.4
%-----------------------------------------------------------------------------------------%
\subsection{Number of geometric critical points}
We now investigate the change in the number of geometric critical points as we blow down a toric divisor $D_{\Sigma, 2}$ with self-intersection number $(-1)$. Blowing down $D_{\Sigma,2}$, all Maslov index $2$ disks stemming from $D_{\Sigma, 2}$ (including the basic disk $\beta_{\nu_{2}}$ and basic broken disks in $\mathcal{A}_{2}$) become Maslov index $0$, and thus no longer contribute to the potential. According to the Kushnirenko theorem \cite{Kush}, this results in the loss of multiple critical points, which seems to contradict the fact that the rank of the quantum cohomology decreases by only one.

The key observation, as outlined in \cref{prop:main}, is that broken disks of Maslov index $2$ stemming from $D_{\Sigma, 1}$ - which did not contribute to the potential prior to blowdown due to excessive energy - effectively compensates for all but one of the lost disks. Moreover, the Kushnirenko theorem alone is insufficient in determining the precise number of \textit{geometric} critical points, as some critical points may lie outside of the moment polytope. \cref{prop:summary} tells us that in order to capture only the \textit{geometric} critical point information of $W$, the usual convex Newton polytope should be replaced with the \textit{star-shaped Newton polytope}. See \cite{HK23} for the precise description of the star-shaped Newton polytope. While \cref{prop:summary} applies directly when $D_{\Sigma, 2}$ does not support any non-toric blowup center, this is no longer the case if $D_{\Sigma, 2}$ does indeed support non-toric blowup centers, as the broken disks compensating for the lost disks are no longer \textit{basic broken disks}.

\begin{remark}
We make use of the holomorphic-tropical correspondence by choosing appropriate Kuranishi structures as established in various works (e.g. \cite{GrPS}, \cite{CPS}, \cite{Lin}, \cite{Lin24}, \cite{BCHL}), and do not distinguish the holomorphic disk count $N_{\beta}$ and the tropical disk count $N_{\beta}^{\mathrm{trop}}$. Broken disks are represented as \textit{broken lines} in \cref{fig:case1} and \cref{fig:case2}. See \cite{CPS} and \cite{BCHL} for a concrete definition, or \cite{HK23} for a short summary.
\end{remark}

\begin{prop}\label{prop:main}
Let $(\widetilde{X}, \widetilde{D})$ be a log Calabi-Yau surface obtained by performing a sequence of blowups on a toric model $\left( X_{\Sigma}, D_{\Sigma} \right)$, and let $(X, D)$ be the log Calabi-Yau surface after blowing down $m$ toric divisors of $(\widetilde{X}, \widetilde{D})$ with self-intersection number $-1$. Then
\[
	\left| \mathrm{Crit} \left( W \right)\right| = \big| \mathrm{Crit} \left( \widetilde{W} \right)\big| -m .
\]
Moreover, for generic parameters, $W$ is Morse if $\,\widetilde{W}$ is Morse.
\end{prop}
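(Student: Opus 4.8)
The plan is to induct on $m$. Since blowing down a $(-1)$-toric divisor again produces a log Calabi-Yau surface presented through a (smaller) toric model by the same non-toric blowups, we may contract the $m$ divisors one at a time, so it suffices to treat a single blowdown of $D_{\Sigma,2}$ and prove $|\mathrm{Crit}(W)| = |\mathrm{Crit}(\widetilde{W})| - 1$; one checks along the way that \ref{assumption} is preserved for generic parameters, so that \cref{prop:summary} remains available at each stage. By \cref{rmk:minpotential} we may work throughout with the minimal potentials $\widetilde{W} = \widetilde{W}_{\mathfrak{min}}$ and $W = W_{\mathfrak{min}}$. The first move is to invoke \cref{prop:summary} to recast the problem combinatorially: the geometric critical points of $\widetilde{W}$ are in bijection with the trivalent, weight-one vertices of $\mathrm{Trop}(\widetilde{W})$ dual to convenient $2$-cells of the star-shaped Newton subdivision, together with the single critical point carried by the central edge through the origin. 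Counting critical points thus becomes counting these cells, and the whole statement reduces to comparing the star-shaped Newton subdivisions of $W$ and $\widetilde{W}$.

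The core step is to determine $W$ explicitly from \cref{eq:potential_before}. Contracting $D_{\Sigma,2}$ sends every Maslov index $2$ disk stemming from it — the basic disk $z_2 = z^{\nu_2}$ and the basic broken disks of $\mathcal{A}_2$ — to Maslov index $0$, deleting those monomials. The crucial phenomenon, as outlined in the discussion preceding the statement, is that certain previously non-leading broken disks stemming from $D_{\Sigma,1}$ now become leading order and enter the potential: once $D_{\Sigma,2}$ is contracted, $D_{\Sigma,1}$ and $D_{\Sigma,3}$ become adjacent and the associated broken lines no longer carry the excessive energy that kept them from contributing before. I would make this precise through the broken-line/tropical description and then introduce the affine coordinate change on the base induced by the blowdown — the $GL(2,\mathbb{Z})$ shear fixing $\nu_1$ and carrying $\nu_3$ onto the new coordinate axis — so as to realign the moment polytope and compare the two star-shaped Newton polytopes in a single chart.

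I expect the argument to split into two cases. When $D_{\Sigma,2}$ carries no non-toric blowup center, the compensating disks are themselves basic broken disks, $X$ is a log Calabi-Yau surface obtained from the smaller toric model by the same non-toric blowups, and \cref{prop:summary} applies to $W$ directly; a direct comparison of the two unimodal triangulations then shows that, after the coordinate change, the cells spanned by $\nu_2$ and $\mathcal{A}_2$ are replaced by the compensating $D_{\Sigma,1}$-cells, the two configurations differing by exactly one unimodal triangle. The harder case is when $D_{\Sigma,2}$ does support non-toric blowup centers: then the compensating disks are no longer basic broken disks, \cref{prop:summary} does not apply off the shelf, and I would instead analyze the leading-order part $W_0$ directly via energy induction, checking by hand that its Newton subdivision is again a locally convenient unimodal triangulation away from the central cell and carries precisely one fewer convenient cell.

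The main obstacle is exactly this second case: proving that the terms which become leading after contraction reorganize the Newton subdivision so that the net loss is a single cell rather than several, and in particular controlling the non-convenient central cell(s) through the origin — where, following \cite{HK23}, I would argue that the special edge still carries exactly one critical point after the coordinate change. Finally, for the Morse statement I would verify that for generic $\epsilon_j,\lambda_j$ the new subdivision is again a locally convenient unimodal triangulation apart from the central cell(s), so that \ref{assumption} holds for $\omega$ on $X$ and \cref{prop:summary} forces every geometric critical point of $W$ to be non-degenerate; hence $W$ is Morse whenever $\widetilde{W}$ is.
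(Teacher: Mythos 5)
Your overall architecture (reduce to $m=1$, work with the minimal potential, split on whether $D_{\Sigma,2}$ carries non-toric blowup centers, handle the empty case via \cref{prop:summary} and track the compensating $D_{\Sigma,1}$-disks through an affine shear) matches the paper, and your Case I is essentially the paper's \emph{(Case I.)} together with \cref{rmk:toricblowdown}. The gap is in your hard case. There you propose to verify ``by hand'' that the Newton subdivision of the new leading-order potential is a locally convenient unimodal triangulation with one fewer convenient cell, and to conclude by energy induction. But the Newton subdivision is a purely combinatorial invariant of the Laurent series: it can count critical points of $W$ in $(\Lambda^\times)^2$, yet the quantity in \cref{prop:main} is $|\mathrm{Crit}(W)|$ as defined in \cref{eqn:geometriccriticalpoints}, i.e.\ critical points whose valuation lies in the \emph{interior of the moment polytope}. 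Deciding which critical points are geometric requires energy estimates comparing $\mathrm{val}(\alpha)$ with the polytope, and those estimates (the content of \cref{prop:basicbrokendisks} and \cref{prop:summary}, proved in \cite{HK23}) are only available when every term of the potential comes from a basic disk or a \emph{basic} broken disk. Precisely in your hard case the compensating disks $\beta_{\nu_1'}+\beta_{\nu_2'}+l\beta_{\nu_3'}$ are not basic broken disks, so you can neither invoke \cref{prop:summary} nor conclude anything about geometricity from the subdivision alone — in particular you have no argument that the cells you exhibit produce critical points \emph{inside} $\Delta^{\circ}$, nor (the other direction, which your proposal never addresses) that no additional geometric critical points arise elsewhere over $\mathrm{Trop}(W)$.

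The paper closes exactly this hole with an auxiliary surface: it constructs a genuine toric-model log Calabi-Yau $(\underline{X},\underline{D})$ whose potential $\underline{W}$ has the same leading terms as the post-blowdown potential $W'$ in \cref{eq:case2-after}, so that \cref{prop:summary} \emph{does} apply to $\underline{W}$. It then transfers the geometric/non-geometric dichotomy from $\underline{W}$ to $W'$ by a polytope comparison: the moment polytope of $\underline{X}$ sits inside that of $X'$ away from an $\epsilon$-neighborhood of $\underline{D}_{\Sigma,1}$, the geometric critical points of $\underline{W}$ cluster near the corner $\underline{D}_{\Sigma,2}\cap\underline{D}_{\Sigma,3}$, and the non-geometric ones sit at $\lambda$-levels far outside both polytopes. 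To repair your proposal you would need either this device or a full re-derivation of the energy estimates of \cite[Proposition 4.2]{HK23} for the new, non-basic compensating disks; the combinatorial check you describe does not substitute for either.
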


\begin{proof}
We first assume $m=1$, and show that the number of geometric critical points decreases precisely by one when contracting $D_{\Sigma, 2}$. The following arguments follow inductively for when $m \geq 2$. We subdivide the blowdown process into three scenarios. The first case will be when $D_{\Sigma,2}$ supports no blowup center in its interior, while the second and third cases are when $D_{\Sigma,2}$ indeed does support one or more non-toric blowup center.

\emph{(Case I.)} Suppose that $D_{\Sigma,2}$ supports no blowup center. Then we have $\nu_{2}=\nu_{1} + \nu_{3}$, and hence $\nu_{3} = (-1,1)$. The potential is given by
\begin{equation}\begin{aligned}\label{eq:potential_before1}
\widetilde{W}	&= \sum_{i = 1} T^{\lambda_{i}}z^{\nu_{i}} + \sum_{\pi^{\ast}(\beta) \in \mathcal{B}} N_{\beta}T^{\delta(\beta)}z^{\partial \beta},
\qquad
\mathcal{B} := \bigcup_{j=1}^{N} \mathcal{A}_{j}.  \\
	&= z_{1} + z_{2} + T^{\lambda_{3}} \frac{z_{2}}{z_{1}}
	+ \sum_{l=1}^{k} \overbrace{\left(z_{2}\right) \left(T^{\lambda_{3}}\frac{z_{2}}{z_{1}}\right)^{l} }^{\in \mathcal{A}_{2}} + \cdots
\end{aligned}\end{equation}
where we set $\epsilon_{i} = 0$ for notational simplicity (see \cref{rmk:epsilon}).

Blowing down $D_{\Sigma,2}$, the disks $\beta_{\nu_{2}} + j \beta_{\nu_{3}} \in \mathcal{A}_{2}$ (represented in blue in \cref{fig:case1_before}) disappear and no longer contribute to the potential. Instead, the broken disks $\beta_{\nu_{1}} + j \beta_{\nu_{3}}$ stemming from $D_{\Sigma,1}$ now contribute to the (minimal) potential. Note that these disks do contribute to the full potential $\widetilde{W}$ prior to blowdown, but are \textit{not} leading order terms, as there are monomials which represent same disk classes with smaller energy. Let us denote by
\[
	\mathcal{A}'_{2} := 
	\left\{\, 
	\beta_{\nu_{1}} + l \beta_{\nu_{3}}
	\,\, \Big\rvert \,\,
	1 \leq l \leq 
	k =  \# \text{ of non-toric blowup centers in } D_{\Sigma, 3}
	\,\right\}
\]
for the set consisting of these new basic broken disks, depicted in red in \cref{fig:case1_after}.

%--------------------------------------------------------------------------------------------------------------------%
\begin{figure}[h]
\centering
%=================================%
%=================================%
\subcaptionbox{%
	Local model of $\left(\widetilde{X}, \widetilde{D}\right)$ when $D_{\Sigma,2}$ supports no non-toric blowup centers. Basic broken disks from $\mathcal{A}_{2}$ which become Maslov index $0$ after blowdown are represented in blue.
	\label{fig:case1_before}%
	}
	{\makebox[0.45\linewidth][c]
		{\begin{tikzpicture}[scale=0.09]
				
		\coordinate (A) at (0,0);
		\coordinate (B) at (0,50);
		\coordinate (C) at (40,0);
		\coordinate (D) at (70,30);
		\coordinate (E) at (0,-40);		
		\node[circle, opacity=0] (a) at (0,-20) {};

		\draw[line width = 1pt] (B) -- (A) -- (C) -- (D);
		\draw[line width =0.7pt] (41,1) node[cross, rotate = 45] {};
		\draw[line width =0.7pt] (44,4) node[cross, rotate = 45] {};
		\draw [dashed, line width=0.7pt] (41,1)--(-5,47);
		\draw [dashed, line width=0.7pt] (44,4)--(-5,53);
		\draw (46,-1) node[left, rotate = -45] { \huge\} };
		\draw (45,-1) node[right] { k };
		
		\draw (0,20) node[left] { $D_{\Sigma,1}$ };
		\draw (20,0) node[below] { $D_{\Sigma,2}$ };				
		\draw (60,15) node[below] { $D_{\Sigma,3}$ };
				
		\draw [line width=0.8pt, color=blue!70] (34,0)--(34,8)--(19,37);		\draw [line width=0.8pt, color=blue!70] (19,0)--(19,37);
		
		\draw (19,15) node[left] { $\beta_{\nu_{2}}$ };
		\draw (23,31) node[right] { $\beta_{\nu_{2}} + \beta_{\nu_{3}}$ };

		\filldraw (19,37) circle (10pt);		
		\end{tikzpicture}%
	}
	}\hfill%
%=================================%
%=================================%
\subcaptionbox{%
	Local model of $(X, D)$ after contracting $D_{\Sigma,2}$. Broken disks which contribute to the potential after blowdown are shown in red.
	\label{fig:case1_after}%
	}
	{\makebox[0.45\linewidth][c]	
		{\begin{tikzpicture}[scale=0.07]
		
		\coordinate (A) at (0,0);
		\coordinate (B) at (0,50);
		\coordinate (C) at (40,0);
		\coordinate (D) at (70,30);
		\coordinate (E) at (0,-40);		
		
		\draw[line width = 1pt] (B) -- (E);
		\draw[line width = 1pt] (E) -- (D);
		\draw[dotted, line width = 0.7pt] (A) -- (C);				
		\draw[line width =0.7pt] (41,1) node[cross, rotate = 45] {};
		\draw[line width =0.7pt] (44,4) node[cross, rotate = 45] {};
		\draw [dashed, line width=0.7pt] (41,1)--(-5,47);
		\draw [dashed, line width=0.7pt] (44,4)--(-5,53);
		\draw (46.5,-1.5) node[left, rotate = -45] { \huge\} };
		\draw (44.5,-1.5) node[right] { k };
		
		\draw (21,31) node[right] { $\beta_{\nu_{1}} + \beta_{\nu_{3}}$ };		
		
		\draw [line width=0.8pt, color=red!70] (19,37)--(19,23)--(0,23);										
		\filldraw (19,37) circle (10pt);		
		
		\end{tikzpicture}
		}
	}%
%=================================%
%=================================%
\caption{\emph{(Case I.)}}
\label{fig:case1}
\end{figure}%--------------------------------------------------------------------------------------------------------------------%

The potential after blowing down $D_{\Sigma,2}$ becomes
\begin{equation}\begin{aligned}\label{eq:potential_after1}
W	&= z_{1} + T^{\lambda_{3}}\frac{z_{2}}{z_{1}} 
	+ \sum_{l=1}^{k} \overbrace{\left(z_{1}\right)\left(T^{\lambda_{3}}\frac{z_{2}}{z_{1}}\right)^{l}}^{
	\in \mathcal{A}'_{2}
	}
	+ \cdots\\
	& = z_{1} + T^{\lambda_{3}} z_{2} + T^{\lambda_{3}}\frac{z_{2}}{z_{1}}
	+ \sum_{l=1}^{k} \left(z_{2}\right)\left(T^{\lambda_{3}}\frac{z_{2}}{z_{1}}\right)^{l}
	+ \cdots.
\end{aligned}\end{equation}
Notice that since $\nu_{2}=\nu_{1} + \nu_{3}$, the broken disk $\beta_{\nu_{1}} + l\beta_{\nu_{3}}$ plays precisely the role of $\beta_{\nu_{2}} + (l-1) \beta_{\nu_{3}}$, just with greater energy (by $\lambda_{N}$ to be precise).

Since the disks $\beta_{\nu_{1}} + j \beta_{\nu_{3}} \in \mathcal{A}'_{2}$ are \textit{basic} broken disks, it follows from \cref{prop:summary} (see also \cref{rmk:toricblowdown} below) that all geometric critical points of $W$ corresponds to $2$-cell of the star-shaped Newton polytope. Comparing \cref{eq:potential_before1} and \cref{eq:potential_after1},
\[
	\support \widetilde{W}
	=
	\support W
	\cup
	\left\{\,
	(-k,k+1)
	\,\right\},
\]
and therefore it follows that $W$ has precisely one less geometric critical point than $\widetilde{W}$.\\

\begin{remark}\label{rmk:epsilon}
As we have set $\epsilon_{j} = 0$ for notational simplicity,  all non-toric critical points lie on the corner $D_{\Sigma,1} \cap D_{\Sigma,3}$. When exceptional divisors are given generic positive small enough sizes, non-toric critical points lie arbitrarily close to the corner, in the interior of the moment polytope.\\
\end{remark}

\begin{remark}\label{rmk:toricblowdown}
It is worth noting that even though $(X, D)$ is obtained from $( \widetilde{X}, \widetilde{D} )$ by performing blowdown, $(X, D)$ in this case (where $D_{\Sigma, 2}$ supports no non-toric blowup center) is a case which is essentially already covered by \cref{prop:summary}. More precisely, since the blowdown process is completely toric, $(X, D)$ can be understood as a surface obtained after the exact same sequence of blowups from the toric model $(X_{\widetilde{\Sigma}}, D_{\widetilde{\Sigma}})$, \textit{after} contracting $D_{\Sigma, 2}$ from $\left( X_{\Sigma}, D_{\Sigma} \right)$.
Although this perspective gives a short proof for \emph{(Case I.)}, we have detailed the steps of tracking each lost and gained disk, as this serves as an illustrative example for the latter two cases discussed below.\\
\end{remark}

\emph{(Case II.)} Now suppose that $D_{\Sigma,2}$ supports one non-toric blowup center with symplectic size $\epsilon$. That is, $D_{\Sigma,2}$ before the non-toric blowup is a curve with self-intersection $0$, and hence $\nu_{3}=(-1,0)$. The potential is given by
\begin{equation}\begin{aligned}\label{beforeblowdown1}
\widetilde{W} 	&= z_{1} + z_{2} + T^{-\epsilon}z_{1}z_{2} + \frac{T^{\lambda_{3}}}{z_{1}} 
	+\sum_{\pi^{\ast}(\beta) \in \mathcal{A}_{2}}N_{\beta}T^{\delta(\beta)}z^{\partial \beta} + \cdots \\
	&= z_{1} + z_{2} + T^{-\epsilon}z_{1}z_{2} + \frac{T^{\lambda_{3}}}{z_{1}} 
	+ \sum_{j=1}^{k} z_{2} \left( \frac{T^{\lambda_{3}}}{z_{1}} \right)^{j} + \cdots \\
	&= z_{1} + z_{2} + T^{-\epsilon}z_{1}z_{2} + \frac{T^{\lambda_{3}}}{z_{1}}
	+ \sum_{j=1}^{k}T^{j\lambda_{3}}\frac{z_{2}}{z_{1}^{j}} + \cdots,
\end{aligned}\end{equation}
where we have set the sizes of the non-toric blowups on $D_{\Sigma,3}$ to be zero for notational convenience. See \cref{fig:case2_before}, where some of the disks contributing to the potential are shown in blue.

%--------------------------------------------------------------------------------------------------------------------%
\begin{figure}[h!]
\centering
%=================================%
%=================================%
\subcaptionbox{%
	Local model of $( \widetilde{X}, \widetilde{D} )$ when $D_{\Sigma,2}$ supports one non-toric blowup center. The shaded region $R_{1}$ is where the affine coordinate change is applied.
	\label{fig:case2_before}%
	}
	{\makebox[0.8\linewidth][c]
		{\begin{tikzpicture}[scale=0.11]
				
		\coordinate (A) at (0,0);
		\coordinate (B) at (0,50);
		\coordinate (a) at (10,0);
		\coordinate (b) at (10,10);
		\coordinate (c) at (20,0);
		\coordinate (d) at (72,0);
		\coordinate (e) at (92,0);		
		\coordinate (C) at (100,0);
		\coordinate (D) at (100,30);
		\coordinate (E) at (0,-40);		
		
		\fill[blue!20, draw=none, opacity=0.3] (10,51) -- (10,10) --(27,-7)-- (85,-7)--(85,50);
		
		\draw[line width = 1pt] (B) -- (0,10) -- (63,10);
		\draw[dotted, line width = 1pt] (63,10) -- (80,10);
		\draw[line width = 1pt] (80,10)--(85,10)--(85,30);		
										
		\draw[dashed, line width = 1pt] (10,-7)--(a) -- (b) -- (c)--(27,-7);
		\fill[green!30, draw=none, opacity=0.4] (10,-7) -- (b) -- (27,-7);
		
		\draw [line width = 0.8pt, >=latex, <->] (8,4.3) to [out=-50,in=-130] (18,4.5);		
		\draw (15.6,1.6) node[below] { \small{$\left(\begin{matrix} 1 & 1 \\ 0 & 1 \end{matrix}\right)$ }};

		\draw[line width =0.7pt] (10,10) node[cross] {};
		\draw[dashed, line width = 0.7pt] (10,10)--(10,51);

		\draw[line width =0.7pt] (85,15) node[cross] {};
		\draw[line width =0.7pt] (85,18) node[cross] {};		
		\draw[dashed, line width = 0.7pt] (85,15)--(80,15);
		\draw[dotted, line width = 0.7pt] (80,15)--(63,15);		
		\draw[dashed, line width = 0.7pt] (63,15)--(-3,15);	
			
		\draw[dashed, line width = 0.7pt] (85,18)--(80,18);
		\draw[dotted, line width = 0.7pt] (80,18)--(63,18);		
		\draw[dashed, line width = 0.7pt] (63,18)--(-3,18);		
		
		\draw (0,44) node[left] { $D_{\Sigma,1}$ };
		\draw (80,10) node[below] { $D_{\Sigma,2}$ };
		\draw (85,27) node[left] { $D_{\Sigma,3}$ };
		\draw (64,40) node[left] { $R_{1}$ };

		\draw [line width=0.8pt, color=blue!70] (40,0)--(30,10)--(30,40);
			\draw (24,34.6) node[left] { \small{$\beta_{\nu_{1}} + \beta_{\nu_{2}}$ }};
		\draw [line width=0.8pt, color=blue!70] (0,40)--(30,40);
			\draw (20,40) node[above] { \small{$\beta_{\nu_{1}}$ }};				
		\draw [line width=0.8pt, color=blue!70] (30,40)--(10,20)--(0,20);
			\draw (30,27) node[right] { \small{$\beta_{\nu_{2}}$ }};				
		\draw [line width=0.8pt, color=blue!70] (65,0)--(55,10)--(55,15)--(30,40);
			\draw (44,27) node[right] { \small{$\beta_{\nu_{2}} + \beta_{\nu_{3}}$ }};				

		\filldraw (30,40) circle (12pt);

		\end{tikzpicture}%
	}
	}\hfill%
%=================================%
%=================================%
\subcaptionbox{%
	Local model of $( \widetilde{X}', \widetilde{D}' )$ after applying affine coordinate change.
	\label{fig:case2_coor}%
	}
	{\makebox[0.8\linewidth][c]	
		{\begin{tikzpicture}[scale=0.1]
		
		\coordinate (A) at (0,0);
		\coordinate (B) at (0,60);
		\coordinate (a) at (10,0);
		\coordinate (b) at (10,10);
		\coordinate (c) at (20,0);
		\coordinate (d) at (64,0);
		\coordinate (e) at (77,0);
		\coordinate (C) at (80,0);
		\coordinate (D) at (120,40);
		\node[circle, opacity=0] (0) at (-20,64) {};	
		
		\draw[line width = 1pt] (B) -- (0,10) -- (64,10);		
		\draw[dotted, line width = 1pt] (64,10) -- (77,10);
		\draw[line width = 1pt] (77,10) -- (90,10) -- (D);					
				
		\draw[dotted, line width = 1pt] (a) -- (b);
		\fill[green!30, draw=none, opacity=0.4] (b) -- (10,61) -- (61,61);

		\draw[line width =0.7pt] (10,10) node[cross] {};
		\draw[dashed, line width = 0.7pt] (10,10)--(10,61);
		\draw[dashed, line width = 0.7pt] (10,10)--(61,61);		
		
		\draw[line width =0.7pt] (92,12) node[cross, rotate = 45] {};
		\draw [dashed, line width=0.7pt] (92,12)--(52,52);
		\draw [dotted, line width=0.7pt] (52,52)--(10,52);
		\draw [dashed, line width=0.7pt] (10,52)--(-3,52);		
		
		\draw[line width =0.7pt] (93.5,13.5) node[cross, rotate = 45] {};
		\draw [dashed, line width=0.7pt] (93.5,13.5)--(53.5,53.5);
		\draw [dotted, line width=0.7pt] (53.5,53.5)--(10,53.5);
		\draw [dashed, line width=0.7pt] (10,53.5)--(-3,53.5);

		\draw [line width=0.8pt, color=blue!70] (58,58)--(76,40);
		\draw [dotted, line width=0.7pt, color=blue!70] (58,58)--(10,58);
		\draw [line width=0.8pt, color=blue!70] (10,58)--(0,58);
			\draw (67,50) node[right] { \small{$\beta_{\nu_{1}'}$} };

		\draw [line width=0.8pt, color=blue!70] (40,40)--(76,40);
		\draw [dotted, line width=0.7pt, color=blue!70] (40,40)--(10,40);
		\draw [line width=0.8pt, color=blue!70] (10,40)--(0,40);
			\draw (55,40) node[below] { \small{$\beta_{\nu_{1}'} + \beta_{\nu_{2}'} $}};

		\draw [line width=0.8pt, color=blue!70] (76,40)--(85,22);
		\draw [line width=0.8pt, color=blue!70] (85,22)--(85,10);
			\draw (80,32) node[right] { \small{$\beta_{\nu_{2}'} + \beta_{\nu_{3}'} $}};

		\draw [line width=0.8pt, color=blue!70] (76,40)--(76,10);
			\draw (76,18) node[left] { \small{$\beta_{\nu_{2}'} $}};
							
		\filldraw (76,40) circle (12pt);

		\end{tikzpicture}
		}
	}%
%=================================%
%=================================%
\caption{}
\label{fig:case2}
\end{figure}%--------------------------------------------------------------------------------------------------------------------%

In order to keep track of the potential and the moment polytope as we blow down $D_{\Sigma,2}$, we apply an affine coordinate change to the region on the right side of the wall, denoted as $R_{1}$, so that the three adjacent toric divisors align as in \emph{(Case I.)}. The region $R_{1}$ is depicted in blue in \cref{fig:case2_before}. Note that applying affine coordinate changes to the symplectic affine structure is nothing but changing the chosen basis $\{ f_{1}, f_{2} \}$ of $H_{1}(L_{u})$ in \cref{eqn:mirrorzi}.

Recall that the monodromy of the symplectic affine structure around the nodal fiber (in terms of $z_{i}$-coordinates) is given by the matrix $\left( \begin{matrix} 1 & 0 \\ 1 & 1 \end{matrix}\right)$. We apply the \textit{inverse} matrix $\left( \begin{matrix} 1 & 0 \\ -1 & 1 \end{matrix}\right)$ on $R_{1}$, so that the monodromy of symplectic affine coordinates becomes concentrated along the wall \textit{above} the nodal fiber. Let us denote by $\nu_{j}'$ for the inward normals after our coordinate change. We have $\nu_{1}' = (1,-1) = \nu_{1} - \nu_{2}$ and $\nu_{2}' = (0,1) = \nu_{2}$, and hence the basic disks after the coordinate change are given by $\beta_{\nu_{1}'} = \beta_{\nu_{1}} - \beta_{\nu_{2}}$ and $\beta_{\nu_{2}'} = \beta_{\nu_{2}}$.  See \cref{fig:case2_coor}. Therefore the potential function is given by:
\begin{equation}\begin{aligned}\label{eq:case2-before2}
\widetilde{W}' = \widetilde{W}'(z_{1}', z_{2}') &= z_{1}' + z_{2}' + T^{-\epsilon}z_{1}'z_{2}' + \frac{T^{\lambda_{3}}}{z_{1}'}
+ \sum_{l=1}^{k}\left(z_{2}'\right)\left(\frac{T^{\lambda_{3}}}{z_{1}'}\right)^{l} + \cdots \\
&= \frac{z_{1}}{z_{2}} + z_{2} + T^{-\epsilon}z_{1} + T^{\lambda_{3}}\frac{z_{2}}{z_{1}}
+ \sum_{l=1}^{k} \left(z_{2}\right)\left(T^{\lambda_{3}}\frac{z_{2}}{z_{1}}\right)^{l} + \cdots.
\end{aligned}\end{equation}

%--------------------------------------------------------------------------------------------------------------------%
\begin{figure}[h!]
\centering
\begin{tikzpicture}[scale=0.06]

		\coordinate (A) at (0,0);
		\coordinate (B) at (0,60);
		\coordinate (a) at (10,0);
		\coordinate (b) at (10,10);
		\coordinate (c) at (20,0);
		\coordinate (d) at (64,0);
		\coordinate (e) at (77,0);
		\coordinate (C) at (80,0);
		\coordinate (D) at (140,60);
		\node[circle, opacity=0] (0) at (-20,64) {};	
		
		\draw[line width = 1pt] (B) -- (0,-80) -- (D);
		\draw[line width = 0.7pt, color=gray!70] (0,10) -- (90,10);				
			
		\draw[dashed, line width = 0.7pt] (10,10) -- (10,-80);
		\fill[green!30, draw=none, opacity=0.4] (b) -- (10,61) -- (61,61);

		\draw[line width =0.7pt] (10,10) node[cross] {};
		\draw[dashed, line width = 0.7pt] (10,10)--(10,61);
		\draw[dashed, line width = 0.7pt] (10,10)--(61,61);		
		
		\draw[line width =0.7pt] (92,12) node[cross, rotate = 45] {};
		\draw [dashed, line width=0.7pt] (92,12)--(52,52);
		\draw [dotted, line width=0.7pt] (52,52)--(10,52);
		\draw [dashed, line width=0.7pt] (10,52)--(-3,52);		
		
		\draw[line width =0.7pt] (93.5,13.5) node[cross, rotate = 45] {};
		\draw [dashed, line width=0.7pt] (93.5,13.5)--(53.5,53.5);
		\draw [dotted, line width=0.7pt] (53.5,53.5)--(10,53.5);
		\draw [dashed, line width=0.7pt] (10,53.5)--(-3,53.5);

		\draw[line width = 1pt, color= red!80] (50,60)--(15,25) -- (15,-65);
		\fill[red!30, draw=none, opacity=0.4] (50,61) -- (15,25) -- (15,-65)--(141,61);
			\draw (31,42) node[left] { \small{$\underline{D}_{\Sigma,1}$} };
			\draw (15,-14) node[right] { \small{$\underline{D}_{\Sigma,2}$} };					
			\draw (115,30) node[right] { \small{$\underline{D}_{\Sigma,3}$} };
			
			\draw (22,10) node[below] { \small{$(\epsilon,0)$} };
			\draw (25,-65) node[below] { \small{$(\epsilon,-\lambda + \epsilon)$} };			

			\draw (83,-45) node[right] { \small{$\mathrm{Trop}(W')$} };

		\draw[line width = 1pt, color= blue!80] (-10,-86)--(25,-51)--(25,-45)--(60,25)--(-10,25);
		\draw[line width = 1pt, color= blue!80] (-10,-80)--(25,-45);
		\draw[line width = 1pt, color= blue!80] (25,-51)--(95,-51);
		\draw[line width = 1pt, color= blue!80] (60,25)--(95,60);

\end{tikzpicture}
\caption{ $(X', D' )$ and $(\underline{X}, \underline{D})$. The moment polytope of $(\underline{X}, \underline{D})$ is shown in red. The tropicalization of $W'$ is shown in blue.}%
 \label{fig:case2_toric}
\end{figure}
%--------------------------------------------------------------------------------------------------------------------%

We can now perform the ``typical'' toric blowdown. Blowing down $D_{\Sigma,2}$, the disks $\beta_{2}' + l \beta_{3}'$, $l =0, \dots, k$ become Maslov index zero disks. Notice that $\beta_{\nu_{1}}'$ and $\beta_{\nu_{3}}'$ are parallel. Hence unlike \emph{(Case I.)}, the disks $\beta_{\nu_{1}'} + \beta_{\nu_{2}'} + l \beta_{\nu_{3}'}$ contributes to the potential instead of $\beta_{\nu_{1}}' + l\beta_{\nu_{3}}'$. We emphasize that $\beta_{\nu_{1}'} + \beta_{\nu_{2}'} + l \beta_{\nu_{3}'}$ are \textit{not basic broken disks}. Analogous to \emph{(Case I.)}, let us denote
\[
	\mathcal{A}'_{2} := 
	\left\{\, 
	\beta_{\nu_{1}'} + \beta_{\nu_{2}'} + l \beta_{\nu_{3}}
	\,\, \Big\rvert \,\,
	1 \leq l \leq 
	\# \text{ of blowup centers in } D_{\Sigma, 3} 
	\,\right\}.
\]
The resulting potential is:
\begin{equation}\begin{aligned}\label{eq:case2-after}
W' 	&= \frac{z_{1}}{z_{2}} + T^{-\epsilon}z_{1} + T^{\lambda_{3}}\frac{z_{2}}{z_{1}}
			+ \sum_{l=1}^{k} \left(T^{-\epsilon}z_{1}\right)\left(T^{\lambda_{3}}\frac{z_{2}}{z_{1}}\right)^{l} 
			+ \cdots.
\end{aligned}\end{equation}
Comparing \cref{eq:case2-before2} and \cref{eq:case2-after},
\[
	\support \widetilde{W}'
	=
	\support W'
	\cup
	\left\{\,
	(-k,k+1)
	\,\right\}.
\]

The caveat is that, unlike \emph{(Case I.)}, disks belonging to $\mathcal{A}'_{2}$ are \textit{not basic broken disks}. Nonetheless, the broken disk $\beta_{\nu_{1}'} + \beta_{\nu_{2}}'$ plays the role of a \textit{toric divisor}, in the following sense. Consider the log Calabi-Yau surface $(\underline{X}, \underline{D})$ depicted in red in \cref{fig:case2_toric}, whose (leading order terms of the) potential $\underline{W}$ precisely matches $W'$. The inward normal vectors of $\underline{D}_{\Sigma,1}$, $\underline{D}_{\Sigma,2}$, and $\underline{D}_{\Sigma,3}$ are given by $\underline{\nu}_{1} = (1,-1)$, $\underline{\nu}_{2} = (1,0)$ and $\underline{\nu}_{3} = (-1,1)$, and the energy coefficients are given by $\underline{\lambda}_{1} = 0$, $\underline{\lambda}_{2} = -\epsilon$, and $\underline{\lambda}_{3} = \lambda_{3}$. The non-toric blowup centers on $\underline{D}_{\Sigma, 3}$ can be thought of as being at the same locations and introduce additional mirror charts, or as being near the corner $\underline{D}_{\Sigma, 2} \cap \underline{D}_{\Sigma, 3}$ (see \cref{rmk:critpointsoutside}). It is obvious that  $W'$ and $\underline{W}$ share the same critical points. We need to check whether the \textit{geometric} critical points match. 

\cref{prop:summary} shows that every geometric critical point of $\underline{W}$ comes from the star-shaped Newton polytope of $\underline{W}$, among which non-toric critical points are located near the corner $\underline{D}_{\Sigma,2} \cap \underline{D}_{\Sigma,3}$ (this can be easily seen by mapping out the tropicalization $\mathrm{Trop}(\underline{W})$, or by direct computation as can be found in \cite[Lemma 4.5]{HK23}).
Since the moment polytope of $\underline{X}$ (i.e. the domain of $\underline{W}$) contained within the moment polytope of $X'$ outside an $\epsilon$ neighborhood of $\underline{D}_{\Sigma,1}$, it follows that all geometric critical points of $\underline{W}$ are geometric critical points of $W'$. On the other hand, explicit energy estimation shows that every non-geometric critical point of $\underline{W}$ lies safely away from the moment polytope, i.e. given in $\lambda$-levels (a detailed computation can be found in the proof of \cite[Proposition 4.2]{HK23}). This implies that if a critical point of $\underline{W}$ is non-geometric, then it is also a non-geometric critical point of $W'$. Therefore, every geometric critical point of $W'$ is a geometric critical point of $\underline{W}$  and vice versa, and it follows that $W$ has precisely one less geometric critical point than $\widetilde{W}$.

Note that even though the three leading terms of \cref{eq:case2-after} form a non-convenient cell, there is at least one other term forming another non-convenient $2$-cell sharing the non-convenient edge, due to convexity of the moment polytope. By \ref{assumption}, these $2$-cells are precisely those described in \cref{rmk:nonconvenient}, implying that each $2$-cell gives rises to a non-degenerate critical point which extends to that of the full potential.\\

\emph{(Case III.)} Finally, we consider the case when $D_{\Sigma,2}$ supports two non-toric blowup centers, which is analogous to \emph{(Case II.)}. In this scenario, we have $\nu_{3} = (-1,-1)$. We repeat the process of affine coordinate change in \emph{(Case II.)} twice on the region to the right of the second wall. The resulting region can be seen in \cref{fig:case3_toric}. Analogous to \emph{(Case II.)}, the potential is given as follows:
\begin{equation}\begin{aligned}\label{eq:case3-before}
\widetilde{W}'' = \widetilde{W}''(z_{1}'', z_{2}'',) 
	&= z_{1}'' + z_{2}'' + T^{-\epsilon_{1}}z_{1}''z_{2}'' 
	+ T^{-\epsilon_{1}-\epsilon_{2}} z_{1}''z_{2}''^{2}
	+\frac{T^{\lambda_{3}}}{z_{1}''z_{2}''}
	+ \sum_{l=1}^{k} \left(z_{2}''\right) \left(\frac{T^{\lambda_{3}}}{z_{1}''z_{2}''}\right)^{l}
	+\cdots\\
	&= \frac{z_{1}'}{z_{2}'} + z_{2}' + T^{-\epsilon_{1}}z_{1}' 
	+ T^{-\epsilon_{1}-\epsilon_{2}} z_{1}''z_{2}''
	+ \frac{T^{\lambda_{3}}}{z_{1}'}
	+ \sum_{l=1}^{k} \left(z_{2}\right) \left(\frac{T^{\lambda_{3}}}{z_{1}'}\right)^{l}
	+ \cdots\\
	&= \frac{z_{1}}{z_{2}^{2}} + z_{2} + T^{-\epsilon}\frac{z_{1}}{z_{2}} 
	+ T^{-\epsilon_{1}-\epsilon_{2}} z_{1}
	+ T^{\lambda_{3}}\frac{z_{2}}{z_{1}}
	+ \sum_{l=1}^{k} \left(z_{2}\right) \left(T^{\lambda_{3}}\frac{z_{2}}{z_{1}}\right)^{l}
	+\cdots.
\end{aligned}\end{equation}

%--------------------------------------------------------------------------------------------------------------------%
\begin{figure}[h!]
\centering
\begin{tikzpicture}[scale=0.07]

		\coordinate (A) at (0,0);
		\coordinate (B) at (0,60);
		\coordinate (a) at (10,0);
		\coordinate (b) at (10,10);
		\coordinate (c) at (20,0);
		\coordinate (d) at (64,0);
		\coordinate (e) at (77,0);
		\coordinate (C) at (80,0);
		\coordinate (D) at (140,60);
		\node[circle, opacity=0] (0) at (-20,64) {};	
		
		\draw[line width = 1pt] (B) -- (0,-80) -- (D);
		\draw[line width = 0.7pt, color=gray!70] (0,10) -- (90,10);				
				
		\draw[dashed, line width = 0.7pt] (10,10) -- (10,-80);
		\draw[dashed, line width = 0.7pt] (20,10) -- (20,-80);		
		\fill[green!30, draw=none, opacity=0.4] (b) -- (10,61) -- (61,61);		
		\fill[green!30, draw=none, opacity=0.4] (20,10) -- (122,61) -- (71,61);		
		
		\draw[line width =0.7pt] (10,10) node[cross] {};
		\draw[dashed, line width = 0.7pt] (10,10)--(10,61);
		\draw[dashed, line width = 0.7pt] (10,10)--(61,61);
		
		\draw[line width =0.7pt] (20,10) node[cross] {};
		\draw[dashed, line width = 0.7pt] (20,10)--(71,61);
		\draw[dashed, line width = 0.7pt] (20,10)--(122,61);

		\draw[line width =0.7pt] (91,11) node[cross, rotate = 45] {};
		\draw [dashed, line width=0.7pt] (91,11)--(68,34);
		\draw [dotted, line width=0.7pt] (68,34)--(44,34);
		\draw [dashed, line width=0.7pt] (44,34)--(34,34);		
		\draw [dotted, line width=0.7pt] (34,34)--(10,34);
		\draw [dashed, line width=0.7pt] (10,34)--(-3,21);

		\draw[line width =0.7pt] (94,14) node[cross, rotate = 45] {};
		\draw [dashed, line width=0.7pt] (94,14)--(72,36);
		\draw [dotted, line width=0.7pt] (72,36)--(46,36);
		\draw [dashed, line width=0.7pt] (46,36)--(36,36);		
		\draw [dotted, line width=0.7pt] (36,36)--(10,36);
		\draw [dashed, line width=0.7pt] (10,36)--(-3,23);

		\draw[line width = 1pt, color= red!80] (112,61)--(38,24)--(27,13)--(27,-53);
		\fill[red!30, draw=none, opacity=0.4] (112,61)--(38,24)--(27,13)--(27,-53)--(141,61);

		\draw[line width = 1pt, color= blue!80] (127,54)--(67,24)--(56,13)--(34, -33)--(34,-39)--(100,-39);
		\draw[line width = 1pt, color= blue!80] (67,24)--(-6,24);
		\draw[line width = 1pt, color= blue!80] (56,13)--(-6,13);
		\draw[line width = 1pt, color= blue!80] (34, -33)--(-5,-72);
		\draw[line width = 1pt, color= blue!80] (34,-39)--(-5,-78);		

			\draw (0,28) node[left] { \small{$\epsilon_{1}$} };
			\draw (0,16) node[left] { \small{$\epsilon_{2}$} };
			\draw (37,10) node[below] { \small{$\epsilon_{1} + \epsilon_{2}$} };

\end{tikzpicture}
\caption{ $(X'', D'' )$ and $(\underline{X}, \underline{D})$. The moment polytope of $(\underline{X}, \underline{D})$ is shown in red. The tropicalization of $W''$ is shown in blue.}%
 \label{fig:case3_toric}
\end{figure}
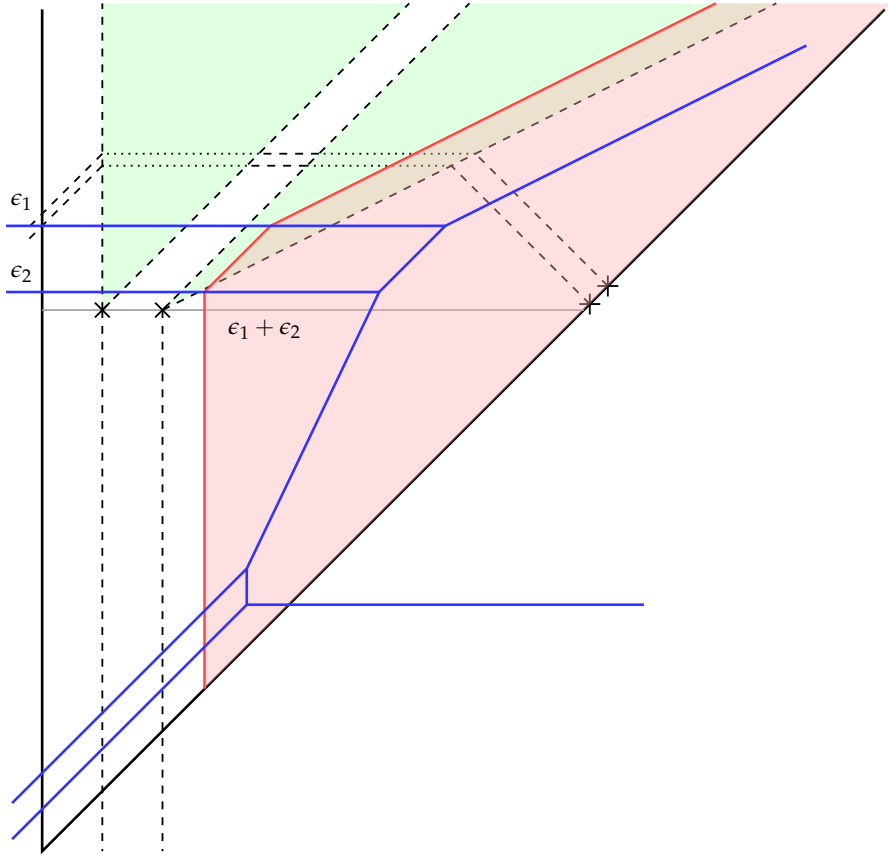
%--------------------------------------------------------------------------------------------------------------------%

Blowing down $D_{\Sigma, 2}$, the disks $\beta_{\nu_{2}''} + l \beta_{\nu_{3}''} $ becomes Maslov index zero disks, and the disks $\beta_{\nu_{1}''} + 2\beta_{\nu_{2}''} + l \beta_{\nu_{3}''}$ contribute to the potential instead . Explicitly,
\begin{equation}\begin{aligned}\label{eq:case3-after}
W''
	&= \frac{z_{1}}{z_{2}^{2}} + T^{-\epsilon_{1}}\frac{z_{1}}{z_{2}} 
	+ T^{-\epsilon_{1}-\epsilon_{2}} z_{1}
	+ T^{\lambda_{3}}\frac{z_{2}}{z_{1}}
	+ \sum_{l=1}^{k} \left( T^{-\epsilon_{1}-\epsilon_{2}} z_{1}\right) \left(T^{\lambda_{3}}\frac{z_{2}}{z_{1}}\right)^{l}
	+\cdots.
\end{aligned}\end{equation}
We follow the same argument in \emph{(Case II.)} using $(\underline{X}, \underline{D})$ shown in red in \cref{fig:case3_toric}, where $\underline{\nu}_{1} = (-1,2)$, $\underline{\nu}_{2} = (1,-1)$, $\underline{\nu}_{3} = (1,0)$, $\underline{\nu}_{4} = (-1,1)$, and $\underline{\lambda}_{1} = 0$, $\underline{\lambda}_{2} = -\epsilon_{1}$, $\underline{\lambda}_{3} = -\epsilon_{1}-\epsilon_{2}$, $\underline{\lambda}_{4} = \lambda_{4}$. The desired result follows.

The last part of the Proposition is a direct consequence of \cref{prop:summary}; \emph{(CaseI.)} follows immediately, and for \emph{(Case II.)} and \emph{(Case III.)}, one can take use of $\underline{W}$ sharing same geometric critical points with $W$.
\end{proof}

\begin{coro}\label{coro:rank}
Using same notations as in \cref{prop:main},
\[
	\mathrm{dim}\left( \, \mathrm{Jac} \left( W \right) \,\right)
	= \mathrm{rank} \left( \, QH^{\ast} \left(X \right) \, \right)
\]
\end{coro}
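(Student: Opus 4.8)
The plan is to bracket both sides of the identity by the corresponding quantities for the blown-up surface $(\widetilde{X}, \widetilde{D})$, where the equality is already known, and then transport it across the blowdown by the two ``decrease by $m$'' statements---one geometric, one topological.

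First I would settle the right-hand side by a purely topological count. As a free module over the Novikov field, $QH^{\ast}(X) \cong H^{\ast}(X; \Lambda)$, so $\mathrm{rank}(QH^{\ast}(X)) = \sum_{i} b_{i}(X)$. Since $X$ is a rational surface we have $b_{1} = b_{3} = 0$ and $b_{0} = b_{4} = 1$, so this rank equals $2 + b_{2}(X)$. Contracting a single $(-1)$-curve is inverse to a one-point blowup and lowers $b_{2}$ by one; performing the $m$ successive contractions of \cref{prop:main} therefore gives
\[
	\mathrm{rank}(QH^{\ast}(X)) = \mathrm{rank}(QH^{\ast}(\widetilde{X})) - m .
\]

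Next I would handle the left-hand side. For generic parameters satisfying \ref{assumption}, \cref{prop:summary} guarantees that $\widetilde{W}$ is Morse, and \cref{prop:main} then gives that $W$ is Morse as well. Interpreting $\mathrm{Jac}(W)$ as the Jacobian ring of $W$ viewed as an analytic function on the rigid analytic mirror $\check{Y}$ (which exists by \cref{lemma:unobstructedness} and \cref{rmk:critpointsoutside}), I would argue that, since $\check{Y}$ fibers via $\mathrm{val}$ over the interior of the moment polytope, the critical points of $W$ lying on $\check{Y}$ are precisely the geometric ones, i.e. the elements of $\mathrm{Crit}(W)$. Each such point is non-degenerate, so the local factor of the Jacobian ring there is one-dimensional over $\Lambda$; summing over the finitely many critical points yields
\[
	\mathrm{dim}\, \mathrm{Jac}(W) = \lvert \mathrm{Crit}(W) \rvert, \qquad \mathrm{dim}\, \mathrm{Jac}(\widetilde{W}) = \lvert \mathrm{Crit}(\widetilde{W}) \rvert .
\]
The base identity $\lvert \mathrm{Crit}(\widetilde{W}) \rvert = \mathrm{rank}(QH^{\ast}(\widetilde{X}))$ is the content of \cite{HK23}: it is built up from the toric model $(X_{\Sigma}, D_{\Sigma})$, for which the number of geometric critical points of $W_{\Sigma}$ equals $\chi(X_{\Sigma}) = \mathrm{rank}(QH^{\ast}(X_{\Sigma}))$, by adding exactly one geometric critical point---and one unit of $b_{2}$---for each non-toric blowup. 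Combining this with \cref{prop:main} and the topological computation above closes the chain:
\[
	\mathrm{dim}\, \mathrm{Jac}(W) = \lvert \mathrm{Crit}(W) \rvert = \lvert \mathrm{Crit}(\widetilde{W}) \rvert - m = \mathrm{rank}(QH^{\ast}(\widetilde{X})) - m = \mathrm{rank}(QH^{\ast}(X)).
\]

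The hard part will be the identity $\mathrm{dim}\, \mathrm{Jac}(W) = \lvert \mathrm{Crit}(W) \rvert$, that is, ensuring only the geometric critical points contribute. The naive algebraic Jacobian ring of the Laurent polynomial $W_{\mathfrak{min}}$ detects every critical point in $(\Lambda^{\times})^{2}$ with multiplicity---by \cref{Kushnirenko} its dimension is the normalized volume of the \emph{convex} Newton polytope---and this generally overcounts, since critical points whose valuation falls outside the moment polytope are invisible to $X$. The crux is that such non-geometric critical points do not lie on $\check{Y}$; this is exactly why \cref{prop:summary} records geometric critical points through the \emph{star-shaped} Newton polytope rather than its convex hull, and why weak unobstructedness (\cref{lemma:unobstructedness}) is invoked to realize $W$ as a genuine function on the analytic mirror. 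Making this comparison rigorous---rather than the two routine endpoint computations---is where the care is required.
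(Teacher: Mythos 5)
Your proposal is correct and follows essentially the same route as the paper: the paper's proof is exactly the chain $\mathrm{dim}\,\mathrm{Jac}(W) = \mathrm{dim}\,\mathrm{Jac}(\widetilde{W}) - m = \mathrm{rank}\,QH^{\ast}(\widetilde{X}) - m = \mathrm{rank}\,QH^{\ast}(X)$, citing \cref{prop:main} and Theorem I of \cite{HK23}. What you add---the Morse-theoretic identification $\mathrm{dim}\,\mathrm{Jac}(W) = \lvert \mathrm{Crit}(W)\rvert$, the Betti-number computation for the rank drop, and the remark that $\mathrm{Jac}(W)$ is taken on the analytic mirror $\check{Y}$ so that only geometric critical points contribute---are precisely the steps the paper leaves implicit, so your write-up is a faithful (if more detailed) version of the same argument.
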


\begin{proof}
\begin{equation*}\begin{aligned}
\mathrm{dim}\left( \, \mathrm{Jac} \left( W \right) \,\right) 
&=\mathrm{dim}\left( \, \mathrm{Jac} \left( \widetilde{W} \right)\,\right)  - m 
&& \qquad \text{by \cref{prop:main}  }  \\
&=  \mathrm{rank} \left( \, QH^{\ast} \left(\widetilde{X} \right) \, \right) - m
&& \qquad \text{by Theorem I \cite{HK23}}\\
&= \mathrm{rank} \left( \, QH^{\ast} \left( X \right) \, \right)
\end{aligned}\end{equation*}
\end{proof}

%-------------------------------  ⤊ ⤊ ⤊  Section 3  ⤊ ⤊ ⤊  -------------------------------%

%-------------------------------  ⤋ ⤋ ⤋  Section 4  ⤋ ⤋ ⤋  -------------------------------%

\section{Mirror symmetry for log Calabi-Yau surfaces}\label{sec:4}
We are now ready to prove \ref{thm:intromain}. The only remaining ingredient for the proof is to show that the quantum cohomology $QH^{\ast}\left( X \right)$ is semi-simple. This is essentially due to the fact that the value $W\left( L_{u}, \nabla \right)$, where the Floer cohomology $L_{u}$ twisted by $\nabla$ is non-zero, is an eigenvalue of the quantum multiplication by $c_{1}(X)$ as observed from [Proposition 6.8]\cite{auroux07}, \cite[Theorem 23.12]{fukaya2019spectral} and \cite{Yuan24}.

\begin{prop}[\ref{thm:intromain2}]\label{prop:semisimple}
Let $(X, D)$ be a log Calabi-Yau surface, and suppose that $\omega$ satisfies \ref{assumption}. Then $QH^{\ast}(X)$ is semi-simple.
\end{prop}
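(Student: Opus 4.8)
The plan is to deduce semi-simplicity from the spectral geometry of the operator $\mathcal{E} := c_{1}(X) \star -$ of quantum multiplication by the first Chern class. The starting point is the observation of Auroux recalled above: whenever the Floer cohomology of $(L_{u}, \nabla)$ is non-vanishing, the critical value $W(L_{u}, \nabla)$ is an eigenvalue of $\mathcal{E}$ (see \cite[Proposition 6.8]{auroux07}, \cite[Theorem 23.12]{fukaya2019spectral}, \cite{Yuan24}). By \cref{prop:main} the potential $W$ is Morse, and by \cref{coro:rank} it has exactly $r := \mathrm{rank}\,QH^{\ast}(X)$ geometric critical points, each contributing a critical value that is an eigenvalue of $\mathcal{E}$. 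Hence, if these $r$ critical values are pairwise distinct, then $\mathcal{E}$ has $r$ distinct eigenvalues on the $r$-dimensional space $QH^{\ast}(X)$, so they exhaust the spectrum, each occurring with multiplicity one.

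The heart of the argument is therefore to show that the $r$ critical values are \emph{pairwise distinct}. I would reduce this to a statement about valuations: for distinct geometric critical points $\alpha_{i} \neq \alpha_{j}$ it suffices to prove $\mathrm{val}\,W(\alpha_{i}) \neq \mathrm{val}\,W(\alpha_{j})$, since two elements of $\Lambda$ of different valuation are automatically distinct. By \cref{prop:summary} each $\alpha_{i}$ projects under $\mathrm{val}$ to a distinct vertex $V_{i}$ of $\mathrm{Trop}(W)$ (or to the distinguished central edge), and the leading-order energy of the energy-minimizing disks at $\alpha_{i}$ is exactly the tropical value $\tau_{W}(V_{i})$; provided the leading coefficient $W_{0}(\alpha_{i})$ does not vanish --- which the non-degeneracy analysis of the central cell in \cref{rmk:nonconvenient} controls --- one has $\mathrm{val}\,W(\alpha_{i}) = \tau_{W}(V_{i})$. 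Now each $\tau_{W}(V_{i})$ is an affine-linear function of the finitely many symplectic parameters $\lambda_{j}, \epsilon_{j}$, so the locus where $\tau_{W}(V_{i}) = \tau_{W}(V_{j})$ for some $i \neq j$ is a finite union of hyperplanes; for generic $\omega$ --- in particular for $\omega$ satisfying \ref{assumption}, after at most a further perturbation within the open cone of admissible parameters --- all the $\tau_{W}(V_{i})$, and hence all critical values, are distinct.

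Granting distinctness, I would conclude by the standard Frobenius-algebra argument. Since $\mathcal{E}$ has $r$ distinct eigenvalues on an $r$-dimensional space it is diagonalizable with one-dimensional eigenspaces, and each eigenspace is in fact an ideal: if $\mathcal{E}v = \lambda v$ then for any $a \in QH^{\ast}(X)$, associativity and commutativity give $\mathcal{E}(a \star v) = c_{1} \star (a \star v) = a \star (c_{1} \star v) = \lambda\,(a \star v)$, so $a \star v$ again lies in the $\lambda$-eigenspace. The associated orthogonal idempotents $e_{1}, \dots, e_{r}$ then split $QH^{\ast}(X) \cong \prod_{i=1}^{r} \Lambda e_{i}$, a product of copies of the algebraically closed Novikov field $\Lambda$, which is precisely semi-simplicity. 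The main obstacle is the middle step, namely controlling the critical values finely enough to guarantee distinctness: the valuation argument is clean when all critical points sit over genuine trivalent vertices, but the critical point(s) lying over the non-convenient central edge of \ref{assumption} must be treated separately, and one must verify that a generic choice of parameters separates \emph{all} tropical values simultaneously without destroying the unimodularity demanded by \ref{assumption}.
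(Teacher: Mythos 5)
Your overall strategy coincides with the paper's: identify the critical values of $W$ with eigenvalues of $c_{1}(X)\star -$, show the $r$ critical values are pairwise distinct, and conclude by the commutativity/associativity argument that each eigenspace is one-dimensional. The concluding algebra is fine, and the eigenvalue identification is the right tool. The genuine gap is in the middle step, and it sits exactly where you deferred it. Your reduction ``distinct valuations $\Rightarrow$ distinct values'' cannot separate the two critical points lying over the central non-convenient edge of the Newton subdivision: those two points, arising from the two unimodal $2$-cells sharing that edge, in general have critical values of the \emph{same} valuation, differing only in their leading coefficients (as in the familiar Fano local models, where the values differ by roots of unity). No hyperplane-avoidance argument in the parameters $\lambda_{j},\epsilon_{j}$ can ever separate two equal tropical values attached to one and the same edge, so your genericity mechanism is structurally blind to this pair. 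The paper handles precisely this case by a direct computation of the leading coefficients, not by valuations; since you name this case ``the main obstacle'' but do not resolve it, the proof is incomplete at its crux.

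Two further, smaller discrepancies. First, you allow ``at most a further perturbation within the open cone of admissible parameters,'' but the proposition asserts semi-simplicity for \emph{every} $\omega$ satisfying \ref{assumption}; the paper avoids any extra perturbation by computing the valuations of the critical values explicitly (they equal the $\epsilon_{i}$ at non-toric critical points, and affine expressions in the $\lambda$'s at toric trivalent vertices) and checking that \ref{assumption} already separates them, so if your argument truly needs more genericity you have proved a weaker statement. Second, to invoke Auroux's result one must know that the pair $(L_{u},b)$ attached to each geometric critical point has nonvanishing Floer cohomology; you assert that each critical point ``contributes an eigenvalue'' without this verification, whereas the paper supplies it by differentiating \cref{eqn:Wfiber} to conclude that the deformed Floer differential $m_{1}^{b,b}$ vanishes at a critical point, so the pair is non-displaceable.
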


\begin{proof}
Suppose that $\mathrm{dim}\left( \, \mathrm{Jac} \left( W \right) \,\right) = \mathrm{rank} \left( \, QH^{\ast} \left( X \right) \, \right) = K$, and let $(\underline{X}, \underline{D})$ and $\underline{W}$ be as constructed in the proof of \cref{prop:main}. We first show that the critical values of $\underline{W}$, and hence that of $W$, are all distinct at each $K$ geometric critical points.

For a non-toric critical point $\alpha$ (i.e. at least one energy minimizing disk of $\alpha$ is from $\mathcal{A}_{j}$), let us consider the following local expression of the leading order terms of $\underline{W}$ :
\[
	z_{2} + z_{1} \prod_{i=1}^{k}{\left(1+T^{-\epsilon_{i}}z_{2}\right)}.
\]
Under the valuation map, each critical point $\alpha_{i}$ is projected onto a distinct vertex of the tropicalization $\mathrm{Trop}(\underline{W})$. The coordinates of each vertex are given by $\mathrm{val}(\alpha_{i}) = \left(\epsilon_{i}+\sum_{j=1}^{i-1}(\epsilon_{j}-\epsilon_{i}), \,\, \epsilon_{i}\right)$, and hence the critical value at each critical point is $\epsilon_{i}$, which are distinct under \ref{assumption}. 

Now suppose that $\alpha$ is a toric critical point. Then by \ref{assumption}, $\alpha$ either lies on the interior of an edge of $\mathrm{Trop}(\underline{W})$, dual to the unique non-convenient edge of the Newton subdivision, or on a trivalent vertex of weight $1$. For the former case, direct computation shows that the critical value at the two critical points (arising from the two unimodal $2$-cells of the Newton subdivision) are distinct. 

If $\alpha$ lies over a trivalent vertex with weight $1$, then $\alpha$ has precisely three associated energy minimizing disks, giving rise to three monomials, say $T^{\lambda_{i}}z^{\partial \beta_{\nu_{i}}}$, $T^{\lambda_{j}}z^{\partial \beta_{\nu_{j}}}$, and $T^{\lambda_{k}}z^{\partial \beta_{\nu_{k}}}$. After appropriate coordinate change, we may assume $\lambda_{j} = 0$ and $\nu_{j} = (0,1)$. Denoting $\mathrm{val}(\alpha) = (x_{0}, y_{0})$, the valuation of the critical value at $\alpha$ is given by
\begin{equation*}\begin{aligned}
\mathrm{val}\left( \underline{W} \big|_{\alpha} \right)
	&= \mathrm{val}\left( T^{\lambda_{i}}\alpha^{\partial \beta_{\nu_{i}}}  + T^{\lambda_{j}}\alpha^{\partial \beta_{\nu_{j}}} + T^{\lambda_{k}}\alpha^{\partial \beta_{\nu_{k}}} \right) \\
	&= \mathrm{min} \left\{ \mathrm{val}\left( T^{\lambda_{i}}\alpha^{\partial \beta_{\nu_{i}}} \right) ,  \mathrm{val}\left( T^{\lambda_{j}}\alpha^{\partial \beta_{\nu_{j}}} \right) , \mathrm{val}\left( T^{\lambda_{k}}\alpha^{\partial \beta_{\nu_{k}}} \right)\right\}\\
	&= \mathrm{val}\left( T^{\lambda_{j}}\alpha^{\partial \beta_{\nu_{j}}} \right) = \lambda_{j} + \langle (x_{0}, y_{0}), \nu_{j}\rangle\\
	&=  y_{0}.
\end{aligned}\end{equation*}
Direct computation shows that $y_{0}$ is given in terms of $\lambda_{i}$ and $\lambda_{k}$, hence again distinct under \ref{assumption}.

On the other hand, recall that if a Lagrangian fiber $L_{u}$ is non-displaceable, then the value $W(L_{u}, \nabla)$ is equal to the eigenvalues of the linear map
\[
	c_{1}(X) \star - : QH^{\ast}\left(X\right) \rightarrow QH^{\ast}\left(X\right)
\]
(e.g. \cite[Proposition 6.8]{auroux07}). It is not difficult to see that the pair $(L_{u}, b)$ associated to a geometric critical point $\alpha$ is indeed non-displaceable; differentiating \cref{eqn:Wfiber} shows that the Floer differential $m_{1}^{b,b}$ vanishes. (Recall that $(L_{u}, b)$ associated to $\alpha$ is obtained via the relation \cref{eqn:relation}, or by considering mirror charts from nodal fibers as explained in \cref{rmk:critpointsoutside}.) Hence it follows that a critical value of $W$ is an eigenvalue of the map $c_{1}(X) \star - $.
That is, $QH^{*}(X)$ has $K$ distinct eigenvalues. Note that $a \star b = 0$ for any $a, b \in QH^{*}(X)$ with eigenvalues $\lambda_{a} \neq \lambda_{b}$. Indeed,
\[
	 \lambda_{a} a \star b
	 = (c_{1} \star a) \star b
	 =(a \star c_{1}) \star b
	 = a \star (c_{1} \star b) 
	 = a \star \lambda_{b} b 
	 = \lambda_{b} a \star b.
\]
Since $QH^{*}(X)$ has rank $K$, it follows that each eigenspace must be dimension $1$.
\end{proof}

Combining \cref{prop:main}, \cref{coro:rank}, \cref{prop:semisimple}, and \cite[Theorem I]{HK23}, we have:
\begin{thm}[\ref{thm:intromain}]\label{thm:main}
Let $(X, D)$ be a log Calabi-Yau surface, and suppose that $\omega$ satisfies \ref{assumption}. Then,
\[
	\mathrm{Jac} \left(W \right) = QH^{\ast} \left( X \right).
\]
\end{thm}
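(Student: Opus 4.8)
The plan is to reduce the statement to a purely algebraic comparison of two finite-dimensional semi-simple commutative algebras over $\Lambda$. The first observation I would record is that $\Lambda$, being the Novikov field over the algebraically closed field $\mathbb{C}$ with divisible value group $\mathbb{R}$, is itself algebraically closed; consequently any finite-dimensional commutative semi-simple $\Lambda$-algebra of dimension $K$ is isomorphic to the split algebra $\Lambda^{K} = \Lambda \times \cdots \times \Lambda$. It therefore suffices to prove that both $\mathrm{Jac}(W)$ and $QH^{\ast}(X)$ are semi-simple of one and the same dimension $K$, and then to upgrade the resulting abstract isomorphism to the geometrically meaningful one by matching idempotents.

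All three inputs are already in place. By \cref{coro:rank} we have $\dim_{\Lambda} \mathrm{Jac}(W) = \mathrm{rank}\, QH^{\ast}(X) =: K$, so the two algebras have equal dimension, this being where the blowdown count \cref{prop:main} and the base case \cite[Theorem I]{HK23} enter. Semi-simplicity of $QH^{\ast}(X)$ is exactly \cref{prop:semisimple}. For $\mathrm{Jac}(W)$ I would invoke the Morse property from \cref{prop:main}: since every geometric critical point of $W$ is non-degenerate, the Jacobian ring splits as a product of the local Milnor algebras at the critical points, each of which has Milnor number $1$ and hence equals $\Lambda$. Thus $\mathrm{Jac}(W) \cong \Lambda^{K}$ as well, and in particular it is semi-simple; combined with the dimension count this already yields an abstract $\Lambda$-algebra isomorphism $\mathrm{Jac}(W) \cong QH^{\ast}(X)$.

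To make the identification canonical—the content of genuine closed-string mirror symmetry rather than a dimensional coincidence—I would match the two $K$-fold splittings through their common distinguished element. Writing $\mathrm{Jac}(W) \cong \prod_{\alpha} \Lambda$ with factors indexed by the geometric critical points, multiplication by the class $[W]$ acts on the factor at $\alpha$ by the scalar $W|_{\alpha}$; writing $QH^{\ast}(X) \cong \prod \Lambda$ as the eigenspace decomposition of $c_{1}(X) \star -$, the eigenvalue on each factor is, by the Auroux-type observation recalled in the proof of \cref{prop:semisimple}, exactly one of the critical values $W|_{\alpha}$. Since \ref{thm:intromain2} guarantees these $K$ critical values are pairwise distinct and there are $K$ of them on each side, the assignment sending the eigenspace of $c_{1}(X)\star-$ with eigenvalue $c$ to the idempotent factor of $\mathrm{Jac}(W)$ on which $[W]$ acts by $c$ is a bijection of idempotents, and hence defines a distinguished $\Lambda$-algebra isomorphism realizing closed-string mirror symmetry for $(X,D)$.

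The main obstacle I anticipate is not the final algebra but the bookkeeping underlying \cref{coro:rank}: one must ensure that $\dim_{\Lambda}\mathrm{Jac}(W)$, which a priori counts \emph{all} critical points in $(\Lambda^{\times})^{2}$ with multiplicity, agrees with the number of \emph{geometric} critical points, so that the Morse decomposition into $K$ one-dimensional factors is genuinely complete and no critical value is silently lost or doubled. This is exactly where the careful tracking of lost and gained disks in \cref{prop:main}, together with the star-shaped Newton polytope formalism and the base case \cite[Theorem I]{HK23}, is indispensable, since it is what certifies that the honest number $K$ survives the contraction of the $(-1)$-divisor.
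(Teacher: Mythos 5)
Your proposal is correct and takes essentially the same route as the paper: the paper's (one-line) proof is precisely the combination of \cref{prop:main}, \cref{coro:rank}, \cref{prop:semisimple} and \cite[Theorem I]{HK23}, i.e.\ both $\mathrm{Jac}(W)$ and $QH^{\ast}(X)$ are semi-simple commutative algebras of the same rank $K$ over the algebraically closed Novikov field $\Lambda$, hence both split as $\Lambda^{K}$. Your final step matching idempotents via the pairwise-distinct critical values and the eigenvalues of $c_{1}(X)\star-$ is a nice refinement that makes the isomorphism canonical rather than abstract, but it relies only on the Auroux-type observation already used in the proof of \cref{prop:semisimple}, so it is an elaboration of the paper's argument rather than a different one.
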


%-------------------------------  ⤊ ⤊ ⤊  Section 4  ⤊ ⤊ ⤊  -------------------------------%

% Bibliography
\bibliographystyle{amsalpha}
\bibliography{geometry}

\end{document}